\newtheorem{theorem}{Theorem}[section]
\newtheorem{proposition}[theorem]{Proposition}
\newtheorem{corollary}[theorem]{Corollary}
\newtheorem{lemma}[theorem]{Lemma}
\newtheorem{remark}[theorem]{Remark}
\newcommand\restr[2]{\ensuremath{\left.#1\right|_{#2}}}
\begin{document}  

\title{Controlled surgery and $\mathbb{L}$-homology}
 
\author[F. Hegenbarth and D. Repov\v{s}]{Friedrich Hegenbarth and Du\v{s}an Repov\v{s}}

\address{Dipartimento di Matematica "Federigo Enriques", 
Universit\` a degli studi di Milano, 
20133 Milano, Italy}
\email{friedrich.hegenbarth@unimi.it} 

\address{Faculty of Education and Faculty of Mathematics and Physics, University of Ljubljana \& Institute of Mathematics, Physics and Mechanics,
 1000 Ljubljana, Slovenia}
\email{dusan.repovs@guest.arnes.si}

\begin{abstract}
This paper presents an alternative approach to controlled surgery obstructions. The obstruction for a degree one normal map $(f,b): M^n \rightarrow X^n$ with control map $q: X^n \rightarrow B$ to complete controlled surgery is an element $\sigma^c (f, b) \in H_n (B, \mathbb{L})$, where $M^n, X^n$ are topological manifolds of dimension $n \geq 5$. Our proof uses essentially the geometrically defined $\mathbb{L}$-spectrum as described by Nicas (going back to Quinn) and some well
known homotopy theory.
We also outline the construction of
the algebraically defined obstruction, and  we explicitly describe the assembly map $H_n (B, \mathbb{L}) \rightarrow L_n (\pi_1 (B))$ in terms of forms
in the case $n \equiv 0 (4)$.
 Finally, we explicitly  determine
 the canonical map $H_n (B, \mathbb{L}) \rightarrow H_n (B, L_0)$.
\end{abstract}

\keywords{Generalized manifold,
resolution obstruction, 
controlled surgery, 
controlled structure set,
 $\mathbb{L}_q$-surgery, 
Wall obstruction}
\subjclass[2010]{Primary 
57R67,
57P10,
57R65; 
Secondary 
55N20, 
55M05}
\dedicatory{Dedicated to the memory of Professor Andrew Ranicki (1948-2018)}

\date{\today}
\maketitle

\section*{Introduction}
To solve a surgery problem one encounters an obstruction being an element of the Wall
group~\cite{Wall70}. If one does controlled surgery with respect to a control
map over $B$, the obstruction belongs to a controlled version of Wall
groups. Both groups are constructed in a purely algebraic way as equivalence classes of certain forms or formations. The principal result (cf.~Theorem~\ref{theorB}~in~Section~\ref{sec3}) of the present
 paper shows
  that 
  controlled obstructions are elements of $H_n (B, \mathbb{L})$, where $\mathbb{L}$ is the geometrically defined surgery spectrum as described by Nicas~\cite{Nic82}. The basic idea of our
   proof is that controlled surgeries are done in small regions of the manifold when projecting it 
   onto
    $B$
  (and this fits well with  $\mathbb{L}$-homology
   of $B$).
   The proof is given in Section~\ref{sec3}.

In Section~\ref{sec1} we review the algebraic construction of controlled surgery obstructions for the case $n \equiv 0 (4)$ in terms of forms. In Proposition~\ref{prop1.1} we
 show how to obtain from this the Hermitian form of the uncontrolled surgery 
obstruction.

In~Section~\ref{sec2} we introduce  relevant surgery spaces and $\mathbb{L}$-spectra. We follow the Nicas description~\cite{Nic82} (which goes back to Quinn~\cite{Qui69}). The surgery spaces and spectra are defined semi-simplicially, i.e. by adic surgery problems. According to the targets of the surgery problems, one obtains spectra denoted by $\mathbb{L}$, resp. $\mathbb{L}^{PD}$. Here,
 the targets in $\mathbb{L}^{PD}$ are adic 
Poincar\'{e} duality complexes, whereas in $\mathbb{L}$
they
are adic
manifolds. 

Then we prove that the natural inclusion $\mathbb{L} \rightarrow \mathbb{L}^{PD}$ is a homotopy equivalence
 (cf.~Proposition~\ref{lem:equivalence}).
 In particular, $$\pi_n (\mathbb{L}) \cong \pi_n (\mathbb{L}^{PD}),$$
 and as shown by~Wall~\cite{Wall70},
 $$\pi_n (\mathbb{L}^{{PD}}) \cong L_n (\{1\}),$$
  the Wall-group of the trivial group. 
  We note that this problems was not addressed by Nicas~\cite{Nic82}. 

In Section~\ref{subsec2.2} we describe elements of the $\mathbb{L}$-homology group.
The spectrum $\mathbb{L}$ is not connected, in fact, $$\pi_0 (\mathbb{L}) = L_0 \cong \mathbb{Z}.$$
 There is a fiber
sequence of spectra 
$$\mathbb{L} <1> \rightarrow \mathbb{L} \rightarrow K (\mathbb{Z}, 0)$$ 
with $\mathbb{L}<1>$ 
the connected covering of $\mathbb{L}$, and $K (\mathbb{Z}, 0)$ the Eilenberg Mac-Lane spectrum.
We study the induced map $$H_n (B, \mathbb{L}) \rightarrow H_n (B, L_0)$$ and give an explicit formula in Section~\ref{subsec2.3} (cf.~Corollary~\ref{cor:2.4}). It has particular significance when determining the resolution invariant of Quinn (\cite{Qui83, Qui87}).

In Section~\ref{sec3} we treat
 $H_n (B, \mathbb{L})$ as the controlled Wall  group 
 and we present the main result of this paper - an alternative proof
 that  $H_n(B,\mathbb{L})$ is the obstruction group
for controlled surgery problems (cf. Theorem~\ref{theorB}).
Finally, in Epilogue we discuss controlled Wall
realizations of elements in $H_{n+1} (B, \mathbb{L})$ on $n$-manifolds $X$.

\section{Controlled and uncontrolled surgery obstructions}\label{sec1}

\noindent{\bf I.} \indent In this section we denote by $B$ a finite connected polyhedron with fundamental group $\pi = \pi_1(B)$, giving rise to the group ring $\Lambda = \mathbb{Z}[\pi]$. We shall
restrict ourselves
 only to the oriented situation, i.e.
when
 the usual orientation map $\pi  \rightarrow    \{ \pm 1 \}$ is $1$. 
More precisely, we shall
work in the category of oriented topological manifolds and topological bundles.
Normal degree one maps 
$$(f,b): M^n  \rightarrow    X^n$$
 are defined as in Wall~\cite{Wall70} (here, $M$ in
$X$ are $n$-manifolds, possibly with nonempty boundary $\partial M$ and 
$\partial X$, respectively).

 We add to this a reference map $q:X  \rightarrow     B$. In the controlled case it serves as the control
map, where $B$ is 
equipped
by
 a metric given by an embedding $B \subset \mathbb{R}^m$ as a
subcomplex, for a  sufficiently large
$m$.
For controlled surgery we assume that $q$ is a $UV^1$-map, i.e.
 for each contractible open set $U \subset B$, $\pi_1 (q^{-1}(U))=0$ (cf. e.g., Ferry~\cite{Fer94}). 

For
 $\dim X \geq 5$, it was proved by Bestvina that $q$ is homotopic to a $UV^1$-map (cf.~\cite[Theorem~4.4]{BFMW96}). In the case 
when
$\partial X \neq 0$, one must also
 assume that 
$$\restr{q}{\partial X}:\partial X \rightarrow B \ \ \hbox{\rm  is} \ \  UV^1,$$ so in this case
one
 must have
$\dim X \ge 6$. Suppose that
 $f$ restricts to a simple homotopy equivalence on the boundary
 $\partial X$. The map $f$
  can be made highly connected.

 In order to complete the surgery in the middle dimension, a surgery obstruction $\sigma (f,b),$ belonging to the Wall  group $L_n (\pi),$ must vanish.
Here, we may
assume without loss of generality that 
$$q_*: \pi_1 (X) \xrightarrow{\cong} \pi_1 (B).$$
Of course, this holds if $q$ is $UV^1$. If $\sigma (f,b) = 0$, then we get a simple homotopy equivalence $M'  \rightarrow    X$ relative the boundary, if $n\geq 5$, which is normally cobordant to $M  \rightarrow    X$. 

Controlled surgery is much more delicate (cf. \cite{book}). One can define an obstruction $\sigma^c (f,b),$ belonging to the controlled Wall group $L_n (B, \varepsilon, \delta)$  (in the notations of Pedersen, Quinn and Ranicki~\cite{PedQuiRan03}). Here, $\varepsilon >0$ is smaller than a certain $\varepsilon_0 > 0$ which depends on $B$ and $\dim X$,
and
$\delta >0$ is determined by $\varepsilon$. 

When $q$ is $UV^1$ and $n \geq 4$, the following holds: 
If $\sigma^c (f,b) = 0$
 then $(f,b): M  \rightarrow    X$ is normally cobodant to a $\delta$-homotopy equivalence $M' \xrightarrow[]{f'} X$ over $B$.
The map $f': M' \rightarrow X$ is unique up to $\varepsilon$-homotopy. 

This means that there exist a homotopy inverse $g' : X  \rightarrow     M$
 and homotopies $$h_t : f' \circ g' \sim Id_X, \ \  g_t : g' \circ f' \sim Id_{M'}$$ 
such that the tracks of the homotopies 
$$q \circ h_t, \ \ q \circ f' \circ g_t$$ are smaller than $\delta$, measured  in the metric of $B$. 
If $\partial X \neq \emptyset$, one has 
to
additionally assume that $\restr{f}{\partial M}$ is already a $\delta$-homotopy equivalence, and $f'$ is then a $\delta$-homotopy equivalence relative the boundary.

There is an obvious morphism 
$$L_n (B, \varepsilon, \delta)  \rightarrow     L_n (\pi),$$ forgetting the control, also considered as the assembly map. This is because controlled surgeries are done in small pieces which can be glued together to obtain the global result. We shall come back to this point in Section~\ref{sec3}. 

Here, we point out how one can obtain the Wall  obstruction $\sigma (f,b)$ from the controlled obstruction $\sigma^c (f,b)$ (cf. Part IV below).
We shall do this for $n \equiv 0(4)$. This is the case which is interesting for the resolution obstruction. 

\noindent{\bf  II.} \indent Let now 
 $n = 2k$,
  where $k$ 
is even. If $f: M  \rightarrow    X$ is highly connected
then
 one is left with
the following
 exact sequence
$$0  \rightarrow    K_k (f, \Lambda)  \rightarrow     H_k (M, \Lambda)  \rightarrow     H_k (X, \Lambda)  \rightarrow     0.$$
By duality and the Hurewicz-Whitehead theorems, one has to kill
 $$K_k (f, \Lambda) \cong \pi_{k+1} (X, M)$$ by surgeries. Here,
$K_k (f, \Lambda)$ is a stably free 
based $\Lambda$-module, finitely  generated, and carrying a Hermitian $\Lambda$-bilinear form $$\lambda: K_k (f,\Lambda) \times K_k (f,\Lambda)   \rightarrow     \Lambda$$ which is refined by a quadratic form $\mu$, deduced from the bundle map $b$. In Wall~\cite[p.~47]{Wall70}, 
this is called a special Hermitian form. 
Equivalence classes of such special Hermitian forms constitute the Wall group $L_{2k} (\pi)$ (cf. Wall~\cite[Chapter~5]{Wall70} for precise constructions). Hence $$\sigma (f,b) = [K_k (f,\Lambda), \lambda, \mu] \in L_{2k} (\pi).$$

\noindent {\bf III.} \indent We are
now  going to describe the controlled surgery obstructions. It was Quinn who explicitly constructed them (cf. Quinn~\cite[Section~2]{Qui83}). His aim was to prove the
existence of resolutions of generalized manifolds. For this purpose it was not necessary to construct controlled Wall  groups (cf.  also Quinn~\cite{Qui87}). 
A detailed construction can be found in Ferry~\cite{Fer10}. To obtain controlled results one has to work with the chain complex $C_{\#} (X, M)$ instead of homology. Here are the main steps: 
\begin{enumerate}
	\item[Step 1.] $(f,b): M  \rightarrow    X$ is normally cobordant to $(\overline{f}, \overline{b}): \overline{M}  \rightarrow    X$ so that $C_g (X, \overline{M}) = 0$ for $j \leq k$. 
This can be obtained for any surgery problem.	
	To continue,
 we recall that manifolds  $\overline{M}$
satisfy the
controlled Poincar\' e
duality, i.e. the cap product with a fundamental cycle is a $\delta$-chain equivalence $C^{\#} (\overline{M})  \rightarrow     C_{n-\#} (\overline{M})$, and this implies a $\delta$-chain
equivalence $$C^{\#} (X, \overline{M})  \rightarrow     C_{n+1-\#} (X, \overline{M})$$ for arbitrary $\delta > 0$. 
	\item[Step 2.] Using the $\delta$-chain
equivalence 
$$C^{\#} (X, \overline{M})  \rightarrow     C_{n+1-\#} (X, \overline{M})$$ and controlled cell
trading, one proves that $C_{\#} (X, \overline{M})$ is $\delta$-chain equivalent to a chain complex of the type 
$$0  \rightarrow     D_{k+1}  \rightarrow     D_k  \rightarrow     0.$$
	
By doing surgery on small $k$-spheres in $\overline{M},$ according to the
basis
 of $D_k$, one obtains a chain complex of the
type $$0  \rightarrow     A_{k+1}  \rightarrow     0.$$
 Let $M'$ be the result of this
surgery.

	\item[Step 3.] By Quinn~\cite[Proposition~2.4]{Qui83}, the pair $(X, M')$ is $\delta$-homotopy equivalent to a pair $(X', M')$ such that 
	$$ C_{\#} (X', M') =
  		\begin{cases}
    		A_{k+1}	& \quad \# = k+1\\
    		0  		& \quad \text{otherwise.}\\
  		\end{cases}
		$$
\end{enumerate}
Since the chain equivalence in
 Step~1 is a $\delta$-equivalence for arbitrary small $\delta$, we have the same situation in Step~3. So the composition 
$$q' : X'  \xrightarrow[]{\sim}    X \xrightarrow[]{g} B$$ is a $UV^1 (\delta)$-map. This will be sufficient for our purpose (cf.  e.g., Ferry~\cite{Fer10},
Quinn~\cite{Qui83},
Yamasaki~\cite{Yam87} for the concept of geometric algebra of chain complexes,
 $UV^1 (\delta)$, and $\delta$-chain equivalences).

By Step~3, our original surgery problem $M  \rightarrow    X$ is replaced by a normal degree one map $$(f',b'): M'  \rightarrow    X',$$
 where $b'$ is a
 bundle map between the normal bundle $\nu_{M'}$ of $M'$ and the bundle $\xi$ over $X'$, induced by the map $X'  \rightarrow    X$ from the normal bundle $\nu_{X}$ of $X$.

The result is a finitely generated geometric $\mathbb{Z}$-module $C_{k+1} (X',M')$, with obvious intersection number $$\lambda_\mathbb{Z} : C_{k+1} (X',M') \times C_{k+1} (X',M')  \rightarrow    \mathbb{Z},$$ 
refined by a quadratic form $\mu_\mathbb{Z}$,
 determined by the normal data, such that the radius of $\lambda_\mathbb{Z}$ is $\delta$-small: for basis elements $$a,b \in C_{k+1} (X',M')$$ one has 
$$\lambda_\mathbb{Z} (a,b) = 0 \ \  \hbox{\rm  provided that} \ \ d (q'(a), q'(b)) > \delta.$$

The equivalence class of $$[C_{k+1} (X',M'), \lambda_\mathbb{Z}, \mu_\mathbb{Z}] \in L_n(B, \varepsilon, \delta)$$ is the controlled surgery obstruction of the surgery problem $$(f',b'): M'  \rightarrow    X'.$$ 

One notes that the Wall  obstructions $\sigma (f,b)$ and $\sigma (f',b')$ in $L_n (\pi)$ coincide.

\noindent {\bf IV.} \indent The map $L_n (B, \varepsilon, \delta)  \rightarrow    L_n (\pi).$\\
We are given $\sigma (f,b) \in L_n (\pi)$ which we represent by the triple
 $(K_k (f', \Lambda), \lambda, \mu)$. One first notes  that $$K_k (f', \Lambda) = C_{k+1} (X', M') \otimes_\mathbb{Z} \Lambda.$$ Let
$$a_1, \ldots a_r \in C_{k+1} (X', M')$$ be 
a
 $\mathbb{Z}$-basis.
Then 
$$\widetilde{a}_i = a_i \otimes 1, \ \  i = 1, \ldots r$$ is a $\Lambda$-basis of $K_k (f', \Lambda)$. To calculate $\lambda_\mathbb{Z} (a_i, a_j)$, one 
observes
 that the
$a_i$'s are represented by small maps 
$$(D^{k+1}, S^k )  \rightarrow    (X', M'),$$
where
$ \partial a_i : S^k   \rightarrow     M'$ are framed immersions in general position. Let 
$$\partial a_i \cap \partial a_j = \{ p_1, \ldots , p_m \}.$$
Then $$\lambda_\mathbb{Z} (a_i, a_j) = \sum_{i=1}^m \varepsilon_i, \ \ \hbox{\rm where} \ \ \varepsilon_i = \pm 1$$ is the usual algebraic intersection number at the point $p_i$.

The elements $$\widetilde{a}_1, \ldots , \widetilde{a}_r \in K_k (f', \Lambda) \cong C_{k+1} (\widetilde{X'}, \widetilde{M'})$$ are considered as liftings of $\partial a_1, \ldots , \partial a_r$ in the universal covering $\widetilde{M'}$ of $M'$. Alternatively,
 $\widetilde{a}_1, \ldots , \widetilde{a}_r$ are immersed spheres in $M'$ together with connecting paths to a base point of $M'$. We state our observation in the following

\begin{proposition}\label{prop1.1}
With the above assumptions and notations we have
$$\lambda (\widetilde{a}_i, \widetilde{a}_j) = \lambda_\mathbb{Z} (a_i, a_j) g_{ij} \in \Lambda,$$
where $g_{ij} \in \pi$ is determined by the paths connecting $\widetilde{a}_i, \widetilde{a}_j$ to the base point. 
\end{proposition}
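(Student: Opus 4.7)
The plan is to unravel the geometric definition of the Wall Hermitian form $\lambda$ in terms of transverse intersection points and then to use the smallness of the discs $a_i,a_j$ together with the $UV^1(\delta)$-property of $q'$ to show that every intersection point of $\partial a_i$ with $\partial a_j$ contributes the same element $g_{ij}\in\pi$. Once this is established the proposition is immediate, since $\sum_\ell \varepsilon_\ell=\lambda_\mathbb{Z}(a_i,a_j)$.

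First, I would recall the standard geometric description of $\lambda$. The lift $\widetilde{a}_i$ is the immersed sphere $\partial a_i\subset M'$ together with a chosen path $\gamma_i$ from a fixed basepoint $x_0\in M'$ to a basepoint $y_i\in\partial a_i$; similarly for $\widetilde{a}_j$. For every transverse intersection point $p_\ell\in\partial a_i\cap\partial a_j$ one picks auxiliary paths $\alpha_\ell\subset\partial a_i$ from $y_i$ to $p_\ell$ and $\beta_\ell\subset\partial a_j$ from $y_j$ to $p_\ell$, and the standard formula for the intersection pairing on $K_k(f',\Lambda)\cong C_{k+1}(\widetilde{X'},\widetilde{M'})$ reads
$$\lambda(\widetilde{a}_i,\widetilde{a}_j)=\sum_{\ell=1}^{m}\varepsilon_\ell\, g_\ell,\qquad g_\ell=[\gamma_i\cdot\alpha_\ell\cdot\beta_\ell^{-1}\cdot\gamma_j^{-1}]\in\pi,$$
with $\varepsilon_\ell=\pm 1$ the local algebraic intersection sign already used to define $\lambda_\mathbb{Z}$.

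Second, I would prove that $g_\ell$ is independent of $\ell$. Because the discs $a_i,a_j$ are produced by the controlled construction of Part III, their images under $q'$ each have diameter less than $\delta$; hence all intersection points of $\partial a_i$ with $\partial a_j$ project into a single open set $U\subset B$ of diameter at most $2\delta$, which, after shrinking $\delta$ if necessary, may be assumed to be contractible. For any two indices $\ell,\ell'$ a direct cancellation gives
$$g_\ell\, g_{\ell'}^{-1}=\bigl[\gamma_i\cdot\bigl(\alpha_\ell\cdot\beta_\ell^{-1}\cdot\beta_{\ell'}\cdot\alpha_{\ell'}^{-1}\bigr)\cdot\gamma_i^{-1}\bigr],$$
and the inner loop lies entirely in $\partial a_i\cup\partial a_j\subset q'^{-1}(U)$. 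Since $q'$ is $UV^1(\delta)$, this loop is nullhomotopic in $M'$; conjugation by $\gamma_i$ preserves triviality, so $g_\ell=g_{\ell'}=:g_{ij}$, and summing yields the asserted equality $\lambda(\widetilde{a}_i,\widetilde{a}_j)=\lambda_\mathbb{Z}(a_i,a_j)\,g_{ij}$.

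The main obstacle will be precisely this second step: ensuring that the smallness achieved in Steps~1--3 of Part~III genuinely forces all of $\partial a_i\cup\partial a_j$ (including any general-position perturbation used to achieve transversality at the $p_\ell$) to remain inside a single simply connected open set in $B$. This is the point at which the $UV^1(\delta)$-hypothesis on $q'$ is used in an essential way, and it is exactly what makes the controlled obstruction $[C_{k+1}(X',M'),\lambda_\mathbb{Z},\mu_\mathbb{Z}]$ assemble in a well-defined manner to the uncontrolled Wall obstruction $[K_k(f',\Lambda),\lambda,\mu]$.
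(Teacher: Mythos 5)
Your proposal is correct and follows essentially the same route as the paper: both arguments use the $\delta$-smallness of the spheres and the $UV^1$ property of the control map to place $\partial a_i\cup\partial a_j$ inside a simply connected subset of $M'$, so that every transverse intersection point contributes the same group element and the sum collapses to $\lambda_\mathbb{Z}(a_i,a_j)\,g_{ij}$. The only difference is one of presentation: where the paper simply invokes the computation of $\lambda$ from Wall's Theorem~5.2, you write out the formula $\lambda(\widetilde{a}_i,\widetilde{a}_j)=\sum_\ell\varepsilon_\ell g_\ell$ and verify $g_\ell=g_{\ell'}$ explicitly, which is a welcome elaboration rather than a deviation.
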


\begin{proof}
Since the radius of $\lambda_\mathbb{Z}$ is as small as we want, and the immersed spheres are small, we may assume that their
 images in $B$ are contained in a contractible subset. By the $UV^1$ property we conclude that $$\widetilde{a}_i (S^k) \cup \widetilde{a}_j (S^k) \subset U \subset M' \ \ \hbox{\rm with} \ \ \pi_1 (U) = \{1\}.$$
Calculating $\lambda (\widetilde{a}_i, \widetilde{a}_j)$ as in the proof in Wall~\cite[Theorem~5.2]{Wall70}, one obtains the claim.
\end{proof}

The case when $\pi$ is the fundamental group of the $n$-torus, this was first proved by Mio and Ranicki~\cite[Section~10.1]{MioRan06}.
Since any surgery problem $(f,b) : M^n   \rightarrow     X^n$ between $n$-manifolds without boundaries can be considered as a
controlled problem over $Id: X  \rightarrow    X$, we can get the following

\begin{corollary}
Let $n \equiv 0(4)$. Then $$\sigma (f,b) \in L_n (\pi_1 (X))$$ has a representation $(G, \lambda, \mu)$ with $G$ a free $\Lambda$-module with basis $b_1, \ldots , b_r$ such that $$\lambda (b_i, b_j) = n_{ij} g_{ij}, \  n_{ij} \in \mathbb{Z} ,\ \hbox{\rm and} \ g_{ij} \in \pi_1 (X).$$
\end{corollary}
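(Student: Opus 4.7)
The plan is to reduce the corollary to Proposition~\ref{prop1.1} by viewing the ordinary surgery problem as a controlled surgery problem over a trivial control space.

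More precisely, I would take the control map to be $q = \mathrm{Id}_X : X \to X$, so that $B = X$ with a metric coming from a fixed embedding of $X$ (triangulable or just into some $\mathbb{R}^m$). The identity is tautologically $UV^1$: for any contractible open $U \subset X$, one has $q^{-1}(U) = U$, whose fundamental group vanishes. Thus all hypotheses needed for the controlled setup of Section~\ref{sec1}, Parts~I--III are satisfied, and the controlled surgery obstruction $\sigma^c(f,b) \in L_n(X,\varepsilon,\delta)$ is defined. Since $\partial X = \emptyset$, there is no boundary condition to verify.

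Next I would run the three-step construction of Part~III: after normal cobordism I obtain $(f',b'): M' \to X'$ such that $C_\#(X',M')$ is concentrated in degree $k+1$, with a finitely generated geometric $\mathbb{Z}$-module $C_{k+1}(X',M')$ carrying a $\delta$-small intersection pairing $\lambda_\mathbb{Z}$ and quadratic refinement $\mu_\mathbb{Z}$. Choose a $\mathbb{Z}$-basis $a_1,\ldots,a_r$ of $C_{k+1}(X',M')$. As recorded in the text, the uncontrolled Wall obstructions satisfy $\sigma(f,b) = \sigma(f',b')$, and the identification
\[ K_k(f',\Lambda) \cong C_{k+1}(X',M') \otimes_\mathbb{Z} \Lambda \]
gives a $\Lambda$-basis $b_i := \widetilde{a}_i = a_i \otimes 1$ of the Wall form module $G := K_k(f',\Lambda)$.

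Applying Proposition~\ref{prop1.1} directly to this data yields
\[ \lambda(b_i,b_j) = \lambda_\mathbb{Z}(a_i,a_j)\, g_{ij} = n_{ij}\, g_{ij}, \]
with $n_{ij} \in \mathbb{Z}$ and $g_{ij} \in \pi_1(X)$, which is exactly the desired representation of $\sigma(f,b)$. The main (minor) obstacle is ensuring that the controlled formalism of Section~\ref{sec1}, which was stated for a finite polyhedron $B$, applies when $B = X$ is an arbitrary closed topological manifold; this is handled by fixing an embedding $X \subset \mathbb{R}^m$ and using the induced metric, since the only properties used in the preceding steps are the $UV^1$-condition on $q$ and smallness in a fixed metric on $B$.
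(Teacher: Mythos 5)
Your proposal is correct and follows exactly the paper's route: the paper's justification for the corollary is precisely the observation that any closed surgery problem can be regarded as a controlled problem over $Id: X \to X$ (which is tautologically $UV^1$), after which Proposition~1.1 yields the stated form of $\lambda$. You even address a point the paper glosses over, namely fixing a metric on $X$ via an embedding so that the controlled formalism (stated for finite polyhedra $B$) applies.
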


\begin{remark}
If $\partial M, \partial X$ are nonempty, the restriction $\restr{f}{\partial M}$ has to be a $\delta$-controlled homotopy equivalence. In the case of $Id: X  \rightarrow    X$ as the control map this implies that $\restr{f}{\partial M}$ is a homeomorphism. However,
 if $\restr{f}{\partial M}$ is a $\delta$-homotopy equivalence for some $UV^1$-map $q: X  \rightarrow    B,$ then the proof goes through.
\end{remark}

\section{\texorpdfstring{$\mathbb{L}$}{L}-spectra and \texorpdfstring{$\mathbb{L}$}{L}-homology}\label{sec2}

\subsection{On the geometric construction of the \texorpdfstring{$\mathbb{L}$}{L}-spectrum}\label{subsec2.1}

The geometric $\mathbb{L}$-spectrum was introduced in  Quinn~\cite{Qui69} as a semi-simplicial $\Omega$-spectrum. Details can also be found in
Nicas~\cite{Nic82} which we
shall
 follow. 
We
 define surgery spaces $\mathbb{L}_r (B)$,  where $B$ is
a polyhedron. We are only interested in
the case  $B = \{*\}$
 and we 
shall
write $\mathbb{L}_r = \mathbb{L}_r \{*\}$.

An $s$-simplex $\sigma \in \mathbb{L}_r$ is a normal degree one map between $(r+s)$-dimensional oriented $(s+3)$-ads of manifolds 
$$(M, \partial_0 M, \ldots, \partial_s M, \partial_{s+1} M)   \rightarrow     (X, \partial_0 X, \ldots, \partial_s X, \partial_{s+1} X)$$ such that $f$ restricted to $\partial_{s+1} M$ is a homotopy equivalence. To
each
 $\sigma$ belongs a reference map of $(s+3)$-ads $$(X, \partial_0 X, \ldots, \partial_s X, \partial_{s+1} X)   \rightarrow     (\Delta^s, \partial_0 \Delta^s, \ldots, \partial_s \Delta^s, \Delta^s)$$ to the standard $s$-simplex
$\Delta^s$. Note that the last face $\partial_{s+1} X$ maps to the interior of $\Delta^s$, and plays a special role in the constructions. 

Let $\mathbb{L}_r (s)$ be the set of $s$-simplices. 
Then
$\mathbb{L}_r$ is a pointed semisimplicial complex with base points the empty problem and there is a homotopy equivalence to
the simplicial loop space of $\mathbb{L}_{r-1}$ (cf. Nicas~\cite[Proposition~2.2.2]{Nic82}):
$$\mathbb{L}_r   \rightarrow     \Omega \mathbb{L}_{r-1}.$$

The collection of surgery spaces 
$\{ \mathbb{L}_r, r \in \mathbb{Z} \}$ 
defines a spectrum 
$\mathbb{L}^{+}$
such that its homotopy groups 
$\pi_n (\mathbb{L}^{+})$ 
are the Wall  groups 
$L_n (1)$.
In the notation of~\cite{Ran92},
$\mathbb{L}^{+}=\mathbb{L}<1>$, whereas
$\mathbb{L}$
denotes the periodic $\mathbb{L}$-spectrum with the $0$-term $=
 \mathbb{Z} \times \sfrac{G}{TOP}.$

In order to do this we have to address two problems. 
The first one
comes from the following easily proved (and well
known) lemma.

\begin{lemma}\label{lem3.1}
The surgery space $\mathbb{L}_0$ defined above
satisfies $\pi_0 (\mathbb{L}_0) = \{0\}$. 
\end{lemma}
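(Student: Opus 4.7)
The plan is to show that every $0$-simplex $\sigma \in \mathbb{L}_0$ is connected via a $1$-simplex to the basepoint, which is the empty problem. Since $\pi_0$ of a pointed semi-simplicial Kan complex is the quotient of its set of $0$-simplices by the relation of being joined by a $1$-simplex, it suffices to construct, for each $\sigma$, a $1$-simplex $\tau \in \mathbb{L}_0$ with $\partial_0 \tau = \sigma$ and $\partial_1 \tau = \emptyset$.

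Unpacking the definition, $\sigma$ is a degree one normal map
$(f,b) \colon (M, \partial_0 M, \partial_1 M) \to (X, \partial_0 X, \partial_1 X)$
of $0$-dimensional oriented $3$-ads with $f|_{\partial_1 M}$ a homotopy equivalence, i.e.\ the data of two finite sets of signed points with a signed-degree-one map that is already a sign-preserving bijection on $\partial_1 M$.

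The $1$-simplex $\tau$ I would construct is the trace of the evident $0$-dimensional surgery converting $(f,b)$ into a homotopy equivalence. Over each $x \in X \setminus \partial_1 X$, single out one preimage $p_x \in f^{-1}(x)$ of the correct sign, and pair the remaining excess preimages in $f^{-1}(x)$ into oppositely signed couples $(q^+, q^-)$. Let $Y$ be the $4$-ad with underlying $1$-manifold $X \times [0,1]$, $\partial_0 Y = X \times \{0\}$, $\partial_1 Y = \emptyset$, $\partial_2 Y = X \times \{1\}$, and reference map to $\Delta^1$ given by the second projection. Let $W$ be the $1$-cobordism obtained from $M \times [0,1]$ by replacing each product pair $\{q^+, q^-\} \times [0,1]$ with a single arc connecting $q^+$ to $q^-$, and let $F \colon W \to Y$ extend $f \times \operatorname{id}$ by sending each surgery arc to the corresponding path $\{x\} \times [0,1] \subset Y$. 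Then $F|_{\partial_2 W}$ is a sign-preserving bijection and hence a homotopy equivalence, $\partial_0 \tau = \sigma$ by construction, and $\partial_1 \tau = \emptyset$.

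The only real work is bookkeeping: checking that $\tau$ satisfies every clause of the definition of a $1$-simplex of $\mathbb{L}_0$, including the extension of the bundle map $b$ across the added arcs (immediate in dimension one, since the tangent microbundle of a $1$-manifold is trivially framed over any $1$-cell) and the compatibility of the reference map with the full adic structure. None of this is obstructed, because pairs of oppositely oriented points always bound arcs. This is also why the lemma is consistent with $L_0(\{1\}) \cong \mathbb{Z}$: the signature is simply invisible to $0$-dimensional surgery and is only recovered on passing from $\mathbb{L}<1>$ to the periodic spectrum $\mathbb{L}$ whose $0$-th space is $\mathbb{Z} \times \sfrac{G}{TOP}$.
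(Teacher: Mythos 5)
Your proof is correct and follows essentially the same route as the paper: both exhibit the trace of the $0$-dimensional surgery --- arcs bounding cancelling pairs of oppositely oriented points --- as a $1$-simplex joining a given $0$-simplex to the base point. The only differences are cosmetic: the paper does it in two steps (first reducing to $\{y_1\}\to\{x\}$, then to the empty problem) while you absorb the residual homotopy equivalence directly into the special face $\partial_2$; just make sure the map on each surgery arc lands in $\{x\}\times[0,1)$ so its endpoints stay in $\partial_0 Y$ and the reference map respects the adic conventions, which is exactly the bookkeeping you already flagged.
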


\begin{proof}
Recall, that we are working in the simplicial category. A typical element $\sigma \in \mathbb{L}_0 (0)$ is a map of degree one of
the
 type $\{ \pm y_1, \ldots, \pm y_k \}   \rightarrow     \{ x \}$.
By the degree one property one can reorder it as
follows
 $$\{  y_1, + y_2, - y_2, \ldots, + y_l, - y_l \}   \rightarrow     \{ x \}.$$ 
The $1$-simplex $\{ I_1, \ldots , I_l \}   \rightarrow     J$, with $I_j$ denoting
the interval with $\partial I_j = \{ y_j, -y_j \}$,
 shows that $\sigma$ is equivalent to $(\{ y_1 \}   \rightarrow     \{ x \})$.
Here we 
view
 $J$ as a degenerate  $1$-simplex consisting of a single
point. Moreover, $(\{ y \}   \rightarrow     \{ x \})$ is equivalent to the empty set. Therefore $\pi_0 (\mathbb{L}_0) = 0$.
\end{proof}

The second problem arises from comparison with the Wall groups in Wall~\cite[Chapter~9]{Wall70} (cf. the proof of Nicas~\cite[Proposition~2.2.4]{Nic82}). The point is that in Wall~\cite{Wall70}, Poincar\'{e}
duality spaces  are used 
as targets, 
whereas
in~\cite{Nic82} manifolds
are used.
This point was not addressed in Nicas~\cite{Nic82}. It might be not the same for a generic polyhedron $B$, but it gives the same result when $B = \{ * \}$.

To see this,
we introduce the surgery spaces $\mathbb{L}^{PD}_r$ in the same
 way as $\mathbb{L}_r$, but  Poincar\'{e}-ads as targets (this was used in Quinn~\cite{Qui69}).
One also proves that  $\mathbb{L}^{PD}_r$ is homotopy equivalent to  $\Omega \mathbb{L}^{PD}_{r-1}$. 
There is an obvious map 
$\mathbb{L}_r   \rightarrow     \mathbb{L}^{PD}_r$, 
and
 $$\pi_0 (\mathbb{L}_r)   \cong   \pi_0 (\mathbb{L}^{PD}_r) = \{ 0 \}.$$ 
We can define $\Omega$-spectra $\mathbb{L}^+$ and $\mathbb{L}^{PD}$ using this. 

To match up with the usual notation,
we write
 $$\mathbb{L}^+ = \{ \mathbb{L}_{-r}, r \geq 0 \}, \ \ 
 \mathbb{L}^{PD} = \{ \mathbb{L}^{PD}_{-r}, r \geq 0 \}.$$
Both spectra are connected and 
$\mathbb{L}^+$ 
 becomes 
$\mathbb{L}\langle 1 \rangle$ in the notations of Ranicki~\cite{Ran92}.

\begin{proposition}\label{lem:equivalence}
The map $\mathbb{L}^+   \rightarrow     \mathbb{L}^{PD}$ is a homotopy equivalence.
\end{proposition}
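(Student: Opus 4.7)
The plan is to reduce the weak equivalence of spectra to an isomorphism of zeroth homotopy groups at each sufficiently high simplicial level, and then to identify both zeroth homotopy groups with the Wall group $L_n(\{1\})$ via the surgery obstruction.

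Since $\mathbb{L}^+$ and $\mathbb{L}^{PD}$ are both $\Omega$-spectra and the inclusion $\mathbb{L}_r \hookrightarrow \mathbb{L}_r^{PD}$ commutes with the loop equivalences $\mathbb{L}_r \simeq \Omega \mathbb{L}_{r-1}$ of the previous paragraphs, showing that $\mathbb{L}^+ \to \mathbb{L}^{PD}$ is a weak equivalence of spectra reduces to showing that $\pi_0(\mathbb{L}_n) \to \pi_0(\mathbb{L}_n^{PD})$ is an isomorphism for every sufficiently large $n$. The case $n = 0$ is immediate from Lemma~\ref{lem3.1} together with its PD analogue, whose proof is identical since a $0$-dimensional Poincar\'e ad is just a signed finite set of points.

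For $n \geq 5$, I would identify both groups with $L_n(\{1\})$ via the surgery obstruction. The PD identification is the content of Nicas~\cite[Proposition~2.2.4]{Nic82}, recovering Wall's original definition of the Wall groups. On the manifold side, the surgery-obstruction map $\pi_0(\mathbb{L}_n) \to L_n(\{1\})$ is surjective by Wall realization carried out in the manifold category (cf.\ Wall~\cite[Chapters~5--6]{Wall70}): every element of $L_n(\{1\})$ arises as the obstruction of some normal degree-one map between manifold ads. It is injective by Wall's surgery theorem in dimensions $n \geq 5$: a manifold-target normal map with vanishing obstruction is normally cobordant, through manifold-target cobordisms, to a simple homotopy equivalence, hence represents the trivial class. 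Since both identifications are compatible with the natural inclusion $\mathbb{L}_n \hookrightarrow \mathbb{L}_n^{PD}$, this yields the desired isomorphism for $n \geq 5$, and the $\Omega$-spectrum property then propagates the equivalence to every degree of the two spectra.

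The principal obstacle will be the relative form of Wall realization required by the simplicial ad structure: one must realize a prescribed class in $L_n(\{1\})$ as the surgery obstruction of a normal map between manifold $(s+3)$-ads whose faces $\partial_0, \ldots, \partial_{s+1}$ carry prescribed compatible data and whose restriction to the last face is a homotopy equivalence. This calls for an induction on the simplex dimension $s$, mirroring in the manifold category the ad-realization construction carried out for Poincar\'e ads in Nicas' proof of Proposition~2.2.4. Once this relative Wall realization is in place, the remainder of the argument is purely formal from the $\Omega$-spectrum structure.
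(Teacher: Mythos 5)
Your argument for degrees $n\ge 5$ is essentially the paper's Case I: both routes come down to Wall's theorem that the surgery obstruction $\Theta\colon L^1_n(\{*\})\to L_n$ is an isomorphism for $n\ge 5$ (the paper phrases this as the composite $\pi_n(\mathbb{L}^+)\to\pi_n(\mathbb{L}^{PD})\cong L^1_n(\{*\})\xrightarrow{\Theta}L_n$ being the identity, which packages your injectivity and surjectivity claims at once), and the adic/relative realization you flag is exactly Nicas's Proposition 2.2.7. So far, so good.

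The gap is your reduction step. For the connective $\Omega$-spectra in question one has $\pi_n(\mathbb{L}^+)=\pi_0(\mathbb{L}_n)$ for each fixed $n\ge 0$: the $\Omega$-spectrum structure identifies $\pi_{n+r}(\mathbb{L}_{-r})$ as $r$ varies with $n$ \emph{fixed}, but it gives no relation between different values of $n$. Since $\mathbb{L}^+=\mathbb{L}\langle 1\rangle$ is connective and not periodic, knowing $\pi_0(\mathbb{L}_n)\to\pi_0(\mathbb{L}_n^{PD})$ is an isomorphism for all large $n$ says nothing about $\pi_0(\mathbb{L}_n)\to\pi_0(\mathbb{L}_n^{PD})$ for $n=1,2,3,4$, and the Whitehead theorem for spectra requires all degrees $n\ge 0$. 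These low-dimensional cases are precisely where surgery fails and where the paper's proof does its real work: for $n=4$ it uses Freedman's realization of unimodular forms and Milnor's homotopy classification (plus a doubling trick to reduce the bounded case to the closed one); for $n=3$ it shows $\pi_3(\mathbb{L}^{PD})=0$ by producing a topological reduction of the Spivak fibration of a $PD_4$-cobordism, which relies on Hambleton's theorem that orientable $4$-dimensional Poincar\'e complexes have reducible Spivak fibrations; and for $n=1,2$ it invokes the fact that all Poincar\'e complexes of those dimensions are manifolds. None of this is formal, and your proposal as written has no mechanism to supply it.
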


\begin{proof}
We shall show that the induced morphism
 $$
 \pi_n (\mathbb{L}^+) \to
 \pi_n (\mathbb{L}^{PD})
 $$
 is an isomorphism for $n \geq 0.$ The assertion will then follow by the Whitehead theorem.

Observe that $$\pi_n (\mathbb{L}^{PD}) 
\cong 
\pi_{n +r} (\mathbb{L}^{PD}_{-r}) 
\cong  
\pi_n (\mathbb{L}^{PD}_0)
\cong
\pi_{0} (\Omega^{n}\mathbb{L}^{PD}_{-n}).
$$ 
However, the last one coincides with the group
$L^{1}_{n}(\{*\}),$
considered by Wall \cite[Chapter 9]{Wall70}.
We begin with the higher dimensional case.

{\bf Case I: $n\ge 5$.}
Wall defines a restricted set 
$$
L^{2}_{n}(\{*\})
\subset
L^{1}_{n}(\{*\})
$$
consisting of simply-connected surgery problems (an adic version of this was considered by Nicas~\cite[Chapter 2]{Nic82}).
He shows that 
$$
L^{2}_{n}(\{*\})
\to
L^{1}_{n}(\{*\})
$$
is bijective for 
$n\ge 4$ (cf. Wall~\cite[Theorem 9.4]{Wall70}, for the adic case cf. Nicas~\cite[Proposition 2.2.7]{Nic82}).
A corollary of this is that the surgery obstruction map
$$
\Theta:
L^{1}_{n}(\{*\})
\to
L_{n} \ \ (  = \hbox{\rm{Wall group of}} \   \pi_1=\{1\}) 
$$
is an isomorphism for 
$n \ge 5$ (cf. \cite[Corollary 9.4.1.]{Wall70}).
Since the composition
$$
L_n=\pi_n (\mathbb{L}^+)
 \to
 \pi_n (\mathbb{L}^{PD})
 \cong
 L^{1}_{n}(\{*\})
 \xrightarrow[]{\Theta}
L_{n}
$$
is the identity, this proves that we indeed have an isomorphism 
$$
\pi_n (\mathbb{L}^+) 
\xrightarrow[]{\cong}
\pi_n (\mathbb{L}^{PD})
$$
for all $n\ge 5$.

{\bf Case II: $n=4$.}
The surgery obstruction map $\Theta$
is defined for $n=4$
and the composition 
$$
L_4=\pi_4 (\mathbb{L}^+)
 \to
 \pi_4 (\mathbb{L}^{PD})
 \cong
 L^{1}_{4}(\{*\})
 \xrightarrow[]{\Theta}
L_{4}
$$
is the identity. Therefore 
$$
\pi_4 (\mathbb{L}^+) 
\to 
\pi_4 (\mathbb{L}^{PD})
$$
is injective.
Since
$$
L^{2}_{4}(\{*\})
\xrightarrow[]{\cong}
L^{1}_{4}(\{*\}),
$$
we can represent an element in 
$ \pi_4 (\mathbb{L}^{PD})$
by
$$
(f,b):M\to X 
\ \
\hbox{\rm{with}} 
 \ \
 \pi_1(X)=\{1\}.$$
Assume first that $\partial X= \emptyset$.
Then
$G=H_2(X,\mathbb{Z})$
is $\mathbb{Z}$-free and the intersection form
$$
\lambda_X: G\times G\to \mathbb{Z}
$$
is unimodular.
By Freedman~\cite[Theorem 1.5]{Freedman},
there is a simply-connected 4-manifold $M'$
realizing 
$(G,\lambda_X).$
However, by Milnor~\cite{Milnor}, $M'$ is homotopically equivalent to $X$, therefore
$$
(f,b):M\to X 
$$
is equivalent to the surgery problem
$$
(f',b'):M\to M' 
$$
arising from 
$
\pi_4 (\mathbb{L}^+). 
$
Now assume that  $\partial X \neq \emptyset$. Then
$$
\restr{f}{\partial M}: \partial M \to \partial X 
$$
is a homotopy equivalence.
We obtain a closed surgery problem by glueing 
$$
Id:M\to M \ \ 
\hbox{\rm{and}}
\ \ 
f:M\to X
$$
along the boundary
$$
N=M \underset{Id}\cup M\xrightarrow[]{{Id}\cup f} M \underset{\restr{f}{\partial M}}\cup X=Y.
$$
By the van Kampen theorem,
$
 \pi_1(Y)=\{1\}.
$
It is now easy to see that the class of $N\to Y$
represents the same as the classes of
$$
(f,b):M\to X \ \ 
\hbox{\rm{and}} \ \ 
{Id}:M\to M
$$
in
$L^{1}_{4}(\{*\})$
(cf. Supplement below).
However, 
${Id}:M\to M$
represents the trivial class, so we are back in the closed case.

{\bf Case III: $n=3$.} (See also a short proof in Supplement below.)
Let 
$$
(f,b):M^3\to X^3 
$$
be
given. As in the case $n=4$, we may assume that $\partial X = \emptyset.$
There is a commutative diagram of well-known isomorphisms of Hurewicz maps between cobordism groups

\begin{center}
\begin{tikzpicture}[node distance=1.5cm, auto]
  \node (N1) {$\Omega_{3}(X)$};
  \node (M) [right of=N1] {};
  \node (N2) [right of=M] {$\Omega^{PD}_{3}(X)$};
  \node (X) [node distance=1.5cm, below of=M] {$H_{3}(X,\mathbb{Z})$};
  \draw[->, font=\small] (N1) to node {$\mu$} (N2);
  \draw[->, font=\small] (N1) to node {} (X);
  \draw[->, font=\small] (N2) to node {} (X);
\end{tikzpicture}
\end{center}
It follows that $\mu$ is an isomorphism and since
$f$ is of degree one, $M$ is $PD$-cobordant to $X$ over $X$.

Let $q:Z\to X$
be a $PD_{4}$-complex over $X$ with
$$\restr{q}{X}=Id \ \ 
\hbox{\rm{and}} \ \ 
\restr{q}{M}=f.$$
The Spivak fibration $\nu_Z$ of $Z$ restricts to the Spivak fibration $\nu_X$ and $\nu_M$, and we have the maps of the $m$-sphere into the Thom spaces

$$
(S^m \times I, S^m \times \{0\}, S^m \times \{1\})
\to
(T{\nu_Z}, T{\nu_X}, T{\nu_M}).
$$
Since $M$ is a manifold, let us for simplicity write
$\nu_M$ also for the stable normal bundle of $M\subset S^m$, i.e.
$$b:\nu_M \to \xi,$$
where $\xi$ is a certain topological
reduction of $\nu_X$.

{\bf Claim.} {\sl If $\nu_Z$ has a topological reduction $\omega$ which restricts to $\xi$ on $X$, then
$$(f,b):M\to X$$ is equivalent to a normal degree one map
$$(f'',b''): M'' \to M, \ 
\hbox{\rm{ where}} \ \ 
b'':\nu_{M''} \to \eta \ \  
\hbox{\rm{and}} \ \ 
\eta=\restr{\omega}{M}.$$}
This is obtained by taking the transverse inverse images of the composition of $(Z,X,M)$:
$$
(S^m \times I, S^m \times \{0\}, S^m \times \{1\})
\to
(T{\nu_Z}, T{\nu_X}, T{\nu_M})
\xrightarrow[]{h}
(T{\omega}, T{\xi}, T{\eta}),
$$
where $h$ comes from the reduction $\omega$ of $\nu_Z$.

Now, the obstructions to existence of such $\omega$
belong to
$$
H^{r+1}(Z,X,\pi_{r}(\sfrac{G}{TOP}))
$$
hence there is 
only
one in 
$$
H^{3}(Z,X,\pi_{2}(\sfrac{G}{TOP}))
\cong
H^{3}(Z,X,\mathbb{Z}_2).
$$
Since
$X\subset Z \xrightarrow[]{q} X$
is the identity, the homomorphism
$$
H^{r}(Z,\mathbb{Z}_2)
\to
H^{r}(X,\mathbb{Z}_2)
$$
is surjective, i.e. the short cohomology
 sequence
$$
0
\to 
H^{3}(Z,X,\mathbb{Z}_2)
\to 
H^{3}(Z,\mathbb{Z}_2)
\to
H^{3}(X,\mathbb{Z}_2)
\to
0
$$
is exact.

The image of the obstruction in
$
H^{3}(Z,\mathbb{Z}_2)
$
is 0 because $\nu_z$ has topological reduction (cf. Hambleton~\cite{Hambleton}).
Therefore such $\omega$ exists which proves the surjectivity of
$$
\{0\}=\pi_{3}(\mathbb{L}^{+})\to \pi_{3}(\mathbb{L}^{PD}),
$$
i.e. 
$\pi_{3}(\mathbb{L}^{PD})=\{0\}$.

{\bf Case IV: $n=1,2$.}
These two cases are obvious since
for $n=1,2$ all $PD$-complexes are manifolds.

This completes the proof of Proposition~\ref{lem:equivalence}.  
\end{proof}

{\bf Supplement.} We add two remarks here.

{\bf 1.}
In the case $n=4$ and $\partial X \neq \emptyset$, a normal cobordism between 
$$N=M\underset{Id}\cup M\to M\underset{\restr{f}{\partial M}}\cup X, \ \ 
M\xrightarrow[]{(f,b)} X, \ \
\hbox{\rm{and}} \ \ 
Id:M\to M
$$
can be constructed as follows:
replace $X$ by $$X'=X\underset{\restr{f}{\partial M}}\cup \partial M \times I$$
being homotopy equivalent to $X$ with a collared boundary
$\partial M \subset X'$.
Then glue
$$M\times I 
\overset{\cdot}{\cup} 
X'\times I \ \
\hbox{\rm{at}} \ \ 
M\times\{0\}\cup X'\times \{0\}$$
along the collar
$$\partial M \times [1-\varepsilon,1] \subset M \cap X'.$$ 
This gives a $PD_5$-complex $V^5$.
A similar
 construction on
$$M\times M 
\overset{\cdot}{\cup}
 M\times I$$ gives a 5-manifold $W^5$.
An obvious degree one normal map can be
constructed from $Id_M$
and $(f,b)$.
Note that 
$$\partial W=M
\overset{\cdot}{\cup} 
M 
\overset{\cdot}{\cup}
 M
 \underset{Id}\cup M \ \
 \hbox{\rm{and}}  \ \
 \partial V=X
 \overset{\cdot}{\cup} 
 M 
 \overset{\cdot}{\cup}
 M\underset{\restr{f}{\partial M}}\cup X.$$
 
 {\bf 2.} In the case $n=3$ it seems that one can replace
 the $PD_4$-complex $Z$ by $Z'$ with $\partial Z'=\partial Z$
 and $\pi_{1}(Z')=\{1\}$ by Poincar\'{e} surgeries.
 The obstruction to finding a reduction
 $\omega$ of $\nu_{Z'}$ such that 
 $$
 \restr{\omega}{X}=\xi \ \ 
\hbox{\rm{and}} \ \ 
 \restr{\omega}{M}=\nu_M
 $$
 belongs to
 $$
 H^{3}(Z', M
 \overset{\cdot}{\cup}
  X, L_2)
 \cong
 H_1(Z',L_2)=0.
 $$ 
Then we get a normal bordism between
 $$(f,b):M\to X 
\ \  
 \hbox{\rm{and}}
  \ \
  Id:M\to M,
  $$
  hence the class of $(f,b)$
  is trivial. 

\subsection{Concerning the elements of \texorpdfstring{$H_n (B, \mathbb{L})$}{Hn (B,L)}}\label{subsec2.2}

We shall write as before $\mathbb{L}$ for the periodic spectrum $\mathbb{L} \langle 0 \rangle$, and $\mathbb{L}^+ = \mathbb{L} \langle 1 \rangle$ for its connective 
covering spectrum. Recall the fibration
sequence (cf. Ranicki~\cite[Section~15]{Ran92})
$$\mathbb{L}^+   \rightarrow     \mathbb{L}   \rightarrow     K (L_0, 0),$$
where $K (L_0, 0)$ is the Eilenberg-MacLane spectrum. We shall study the homology of this sequence in 
Subsection~\ref{subsec2.3}.

Here, we want to describe elements $x \in H_n (B, \mathbb{L})$, where $B \subset S^m$ is a finite polyhedron. We follow Ranicki~\cite[Section~12]{Ran92}, to represent $x$ by a cycle, using a dual cell
decomposition of $S^m$. This is justified by Ranicki~\cite[Remark~12.5]{Ran92}.

If $\sigma$ is a simplex of $S^m$, let $D(\sigma, S^m)$ be its dual cell. It has a canonical $(m - |\sigma| + 3)$-ad structure, where $|\sigma| = \dim\sigma$ 
and 
 $$m - |\sigma| = \dim D (\sigma, S^m).$$

The element $x$ is then represented by a simplicial map $$(S^m, S^m \setminus B)   \rightarrow     (\mathbb{L}_{n-m}, \emptyset)$$ (one should merely replace $S^m \setminus B$ with the supplement of $B$, as done in Ranicki~\cite{Ran92}).
Let us first consider the case when
 $$x: (S^m, S^m \setminus B)   \rightarrow     (\mathbb{L}^+_{n-m}, \emptyset)$$ 
represents an element of
 $H_n (B, \mathbb{L}^+)$, i.e. $$x(\sigma) \in \mathbb{L}^+_{n-m} (m - |\sigma|).$$
However, this is the  surgery space described 
above, i.e. $x(\sigma)$ is a degree one normal map 
$$(f_\sigma, b_\sigma): M^{n-|\sigma|}_\sigma   \rightarrow     X^{n-|\sigma|}_\sigma$$ 
between $(n - |\sigma|)$-dimensional $(m - |\sigma| + 3)$-ads with a reference map $X^{n-|\sigma|}_\sigma \rightarrow     D(\sigma, S^m)$. The cycle 
 condition implies that they can be assembled (the colimit) to a degree one normal map $(f,b): M^n   \rightarrow     X^n$ with boundaries $\partial M, \partial X$, so
 that $\restr{f}{\partial M}$ is a homotopy equivalence, together with a reference map $X   \rightarrow     B$. Note that $x(\sigma) = \emptyset$ if $\sigma \notin B$, and 
$X\rightarrow B$ is the colimit of all
$$X^{n-|\sigma|}_\sigma   \rightarrow     D(\sigma, S^m) \subset S^m$$ 
with a retraction onto $B$ (cf. Nicas~\cite[Theorem~3.3.2]{Nic82}, or Laures and McClure~\cite[Proposition~6.6]{LauMcL14}). Moreover, the boundary map $\partial M   \rightarrow     \partial X$ is the colimit of the various homotopy equivalences
 $$\partial_{m - |\sigma| + 1} M^{n-|\sigma|}_\sigma   \rightarrow     \partial_{m - |\sigma| + 1} X^{n-|\sigma|}_\sigma.$$

To consider the general case $x \in H_n (B, \mathbb{L})$ we recall two properties:
\begin{enumerate}
	\item[(a)] (Periodicity): Suppose
that
 $\dim B - 1 \leq r$.
Then there is a natural isomorphism $H_r (B, \mathbb{L})   \rightarrow     H_{r+4} (B, \mathbb{L})$ (cf. Ranicki~\cite[p.~289-290]{Ran92});
	\item[(b)] If $\dim B < r$, then $H_r (B, \mathbb{L}^+) \xrightarrow{\cong} H_r (B, \mathbb{L}).$
\end{enumerate}

Both properties also easily follow from the Atiyah-Hirzebruch spectral sequence $$H_p (B, \pi_q (\mathbb{L})) \xrightarrow{p + q=r} H_r (B, \mathbb{L}),$$ and the periodicity of the $\mathbb{L}$-spectrum: 
$$\mathbb{L}_r \cong \mathbb{L}_s \ \  \hbox{\rm if} \ \  r - s \equiv 0 (4).$$

In order to represent $x \in H_n (B, \mathbb{L})$, we choose $r$ sufficiently large with $r - n \equiv 0 (4)$, and represent $x$ as 
an
element of
 $H_r (B, \mathbb{L}) \cong H_r (B, \mathbb{L}^+)$ as above. Assembling (colimit) then gives a degree one normal map $(f,b): P^r   \rightarrow     Q^r$ with the
reference map $q: Q^r   \rightarrow     B$, and $\restr{f}{\partial P}$ a homotopy equivalence.

A specific construction of the degree one normal map $P^r   \rightarrow     Q^r$ is given using the identification $H_n (B, \mathbb{L})$ with the controlled Wall group $L_n (B, \varepsilon, \delta)$,
 as established   by
 Pedersen, Quinn and Ranicki~\cite{PedQuiRan03}. 
 Here are some details.
Suppose that also $n \equiv 0 (4)$.
Then $x$ corresponds to a triple $\{ G, \lambda_\mathbb{Z}, \mu_\mathbb{Z}\}$ as described in Section  \ref{sec1}. It can be considered as an
element 
of
 $L_r (B, \varepsilon, \delta)$ by the periodicity, $r - n  \equiv 0 (4)$, and 
it can be realized in a
controlled way,  in the sense of Wall on the boundary $\partial N$ of 
a
regular neighbourhood $N \subset \mathbb{R}^r$ of $B \subset \mathbb{R}^r$.

We obtain $P^r_0$ which can be written as $$P^r_0 = N \cup \partial N \times I \cup \{ \cup_k D^{\frac{r}{2}} \times D^{\frac{r}{2}} \}.$$
Here, $k = \hbox{\rm rank} \ G$, and $\lambda_\mathbb{Z}, \mu_\mathbb{Z}$ are realized as framed immersions $$S^{\frac{r}{2}} \times I    \rightarrow     \partial N \times I.$$ The handles $D^{\frac{r}{2}} \times D^{\frac{r}{2}}$ are attached 
to
 the top along the framed embeddings. By the controlled Hurewicz-Whitehead theorem and the $\alpha$-approximation theorem one gets a degree one normal map $P^r_0    \rightarrow     N$ of $r$-manifolds with boundary, such that $\partial P^r_0    \rightarrow     \partial N$ is a homeomorphism. 
Then we can close this in the usual way to get $$P^r = P^r_0 \cup_\partial N   \rightarrow     N \cup_\partial N = Q^r.$$

It is more convenient to consider $P^r_0   \rightarrow     N$ and we shall denote it
by
 $P^r   \rightarrow     N$ with $\partial P^r   \rightarrow     \partial N$ a homeomorphism. Let $q: N   \rightarrow     B$ be the retraction.  It can 
 be made transverse to the dual cell-decomposition, the map $P^r   \rightarrow     N$ is in the natural way a surgery mock bundle (cf. Nicas~\cite[Section~3.2]{Nic82})

\begin{remark}
If conversely, we are
given a degree one normal map $(f,b): P^r   \rightarrow     Q^r$ with the reference map $q: Q^r   \rightarrow     B$, one can define an element $x \in H_r (B, \mathbb{L}^+)$ by splitting $(f,b)$ into pieces using transversality of $q$ with respect to the dual cell-decomposition of $B \subset S^m$.
\end{remark}

\subsection{The homomorphism \texorpdfstring{$H_n (B, \mathbb{L})   \rightarrow     H_n (B, L_0)$}{Hn (B,L) to Hn (B,L0)}}\label{subsec2.3}

Without loss of generality we may assume 
that
$\dim B = n$. Let $B^{(n-1)}$ be the $(n-1)$-skeleton of $B$. This implies that
$$H_n (B, \mathbb{L}) \cong Z_n (B) \otimes L_0 \hookrightarrow C_n (B) \otimes L_0 \cong H_n (B, B^{(n-1)}, L_0)$$
is injective. Here, $Z_n (B)$ are the $n$-cycles of $B$ and $C_n (B)$ are the $n$-chains. Moreover, from the Atiyah-Hirzebruch spectral sequence one  easily gets
that $$H_n (B, B^{(n-1)},\mathbb{L}) \xrightarrow{\cong} H_n (B, B^{(n-1)}, L_0).$$ 

\begin{lemma}
The natural map $$H_n (B, \mathbb{L})   \rightarrow     H_n (B, B^{(n-1)},\mathbb{L})$$ factorizes as $$H_n (B, \mathbb{L})   \rightarrow     H_n (B, L_0) \subset H_n (B, B^{(n-1)},L_0) \cong H_n (B, B^{(n-1)}, \mathbb{L}).$$
\end{lemma}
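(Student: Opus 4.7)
The plan is to derive the factorization from naturality of the map of spectra $\mathbb{L} \to K(L_0, 0)$ coming from the fibration sequence $\mathbb{L}^+ \to \mathbb{L} \to K(L_0,0)$, combined with the dimensional vanishing $\dim B^{(n-1)} = n - 1 < n$. The key point is that everything fits into a single commutative square whose right vertical arrow is an isomorphism and whose bottom horizontal arrow is an injection.

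Naturality of the long exact sequence of the pair $(B, B^{(n-1)})$ under the map of spectra $\mathbb{L} \to K(L_0,0)$ produces the commutative square
$$
\begin{CD}
H_n(B, \mathbb{L}) @>>> H_n(B, B^{(n-1)}, \mathbb{L}) \\
@VVV @VVV \\
H_n(B, L_0) @>>> H_n(B, B^{(n-1)}, L_0).
\end{CD}
$$
The left vertical arrow is by definition the map $H_n(B, \mathbb{L}) \to H_n(B, L_0)$ through which the lemma claims the top arrow factors. First I would check that the right vertical arrow coincides with the isomorphism $H_n(B, B^{(n-1)}, \mathbb{L}) \xrightarrow{\cong} H_n(B, B^{(n-1)}, L_0)$ recorded just before the lemma. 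This follows from functoriality of the Atiyah--Hirzebruch spectral sequence: on the $E_2$-page the map induced by $\mathbb{L} \to K(L_0, 0)$ in bidegree $(n,0)$ is the identity on $H_n(B, B^{(n-1)}, L_0)$, and the spectral sequence collapses in this bidegree for both theories because $B/B^{(n-1)}$ has cells only in dimension $n$.

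Next I would verify that the bottom horizontal arrow is injective: from the long exact sequence of $(B, B^{(n-1)})$ in ordinary $L_0$-homology together with the vanishing $H_n(B^{(n-1)}, L_0) = 0$ (since $\dim B^{(n-1)} = n - 1$), one gets the desired inclusion $H_n(B, L_0) \hookrightarrow H_n(B, B^{(n-1)}, L_0)$. Inverting the right vertical isomorphism in the square and composing with this injection then yields exactly the factorization claimed. I do not anticipate any real obstacle; the only point demanding some care is the identification of the right vertical arrow with the previously stated isomorphism, which is a purely formal consequence of the naturality of the AHSS under maps of spectra.
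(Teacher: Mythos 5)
Your proof is correct and follows essentially the same route as the paper: the commutative square induced by the spectrum map $\mathbb{L}\to K(L_0,0)$ on the long exact sequence of the pair $(B,B^{(n-1)})$, together with the isomorphism on the relative groups and the injectivity of $H_n(B,L_0)\to H_n(B,B^{(n-1)},L_0)$ already recorded before the lemma. The extra verifications you supply (AHSS naturality for the right vertical arrow, the dimension count for injectivity) are exactly the details the paper leaves implicit.
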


\begin{proof}
This follows by the
commutativity of the diagram:
$$
\CD
@>>> H_n (B, \mathbb{L})   @>>>     \operatorname{H_n (B, B^{(n-1)}, \mathbb{L})} @>>>\\
@.  @VVV                @VV \cong V\\
@>>> H_n (B, L_0)   @>>> \operatorname{H_n (B, B^{(n-1)},L_0)} @>>>\\
\endCD
$$
induced by the map of spectra $\mathbb{L}    \rightarrow     K(L_0, 0)$.
\end{proof}

To prepare the next lemma we must study the spectral sequence $$E^2_{pq} \cong H_p (B, L_q) \xRightarrow[p+q=m]{} H_m (B, \mathbb{L})$$ in more detail.
First, we note that $$E^\infty_{n,m-n} \subset E^2_{n,m-n},$$ since $H_p (B, L_q) = 0$ for $p > n$. Moreover, $$E^\infty_{n,m-n} = \sfrac{F_{n,m-n}}{F_{n-1,m-n+1}},$$ where
 $$F_{n,m-n} = \hbox{\rm Im} (H_m (B^{(n)}, \mathbb{L})    \rightarrow     H_m(B, \mathbb{L})) \cong H_m (B, \mathbb{L}).$$ We consider the composite map $$\alpha:  H_m (B, \mathbb{L})   \rightarrow     E^\infty_{n,m-n} \subset E^2_{n,m-n} \cong H_n (B, L_{m-n}) \cong Z_n (B) \otimes L_{m-n}.$$

\begin{lemma}\label{lem:diagram}
Let $B \subset S^m$, $\dim B = n$, and $m - n \equiv 0 (4)$. Then
$$
\CD
H_n (B, \mathbb{L})   @>>>     \operatorname{H_n (B, L_0) \cong Z_n (B) \otimes L_0}\\
@VV \cong V                @V \cong V \beta V\\
H_m (B, \mathbb{L})   @>> \alpha > \operatorname{Z_n (B) \otimes L_{m-n}}\\
\endCD
$$
commutes. Here, $$H_n (B, \mathbb{L}) \xrightarrow[]{\cong} H_m (B, \mathbb{L})$$ and $$\beta : Z_n (B) \otimes L_0 \xrightarrow[]{\cong} Z_n (B) \otimes L_{m-n}$$ are isomorphisms induced by periodicity.
\end{lemma}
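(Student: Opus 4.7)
The plan is to deduce the commutativity from naturality of the Atiyah--Hirzebruch spectral sequence applied to two maps of spectra: the canonical map $\mathbb{L} \to K(L_0, 0)$ (which controls the horizontal arrows) and the periodicity self-map of $\mathbb{L}$ (which controls the vertical arrows).

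First, I would identify the top horizontal composition with an edge homomorphism at bidegree $(n,0)$. The spectrum map $\mathbb{L} \to K(L_0, 0)$ induces a morphism of Atiyah--Hirzebruch spectral sequences. Since the AHSS for $K(L_0,0)$-homology collapses at $E^2$ onto the row $q=0$, and since $\dim B = n$ gives $E^2_{n,0}(K(L_0,0)) = H_n(B, L_0) \cong Z_n(B) \otimes L_0$, naturality identifies the composition
$$H_n(B, \mathbb{L}) \longrightarrow H_n(B, L_0) \cong Z_n(B) \otimes L_0$$
with the edge projection $H_n(B, \mathbb{L}) \twoheadrightarrow E^\infty_{n,0}(\mathbb{L}) \hookrightarrow E^2_{n,0}(\mathbb{L})$. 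By its very definition, the map $\alpha$ on the bottom is the analogous edge homomorphism at bidegree $(n, m-n)$ for $H_m(B, \mathbb{L})$.

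Next, I would realize the periodicity isomorphism as induced by an actual map of spectra. Since $\mathbb{L}$ carries a product structure (coming from the cartesian product of adic surgery problems, as described by Nicas), multiplication by a generator of $L_4 \cong \mathbb{Z}$ defines a weak equivalence $\Sigma^4 \mathbb{L} \to \mathbb{L}$; iterating $(m-n)/4$ times gives a spectrum map $\Sigma^{m-n}\mathbb{L} \to \mathbb{L}$ representing a generator $\gamma \in L_{m-n}$, and inducing the left vertical $H_n(B, \mathbb{L}) \to H_m(B, \mathbb{L})$. By naturality of the AHSS with respect to this map, its $E^2$-level effect at bidegree $(n,0) \to (n, m-n)$ is multiplication by $\gamma$ on the coefficient factor $L_0 \to L_{m-n}$, which is precisely the right vertical $\beta$.

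Combining these two naturality statements completes the proof: naturality of the edge homomorphism with respect to $\Sigma^{m-n}\mathbb{L} \to \mathbb{L}$ gives commutativity of the inner square of edge maps, and the identifications of the first paragraph convert it into the statement of the lemma. The main obstacle is the realization of periodicity as a spectrum-level map rather than merely an isomorphism of homology groups; once this is in place, the remainder is formal naturality of Atiyah--Hirzebruch spectral sequences.
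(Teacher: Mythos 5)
Your overall strategy -- interpret both horizontal maps as Atiyah--Hirzebruch edge homomorphisms and then invoke naturality of the spectral sequence with respect to a spectrum-level periodicity map and with respect to $\mathbb{L}\to K(L_0,0)$ -- is exactly what the paper's one-line proof (``follows by the spectral sequences'') intends, and your identification of the top map with the edge projection $H_n(B,\mathbb{L})\twoheadrightarrow E^\infty_{n,0}\hookrightarrow E^2_{n,0}=H_n(B,L_0)$ (using that the AHSS of $K(L_0,0)$ collapses onto the row $q=0$ and that $\dim B=n$) is correct.

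The one step that does not work as stated is your realization of periodicity: multiplication by a generator of the \emph{quadratic} group $L_4\cong\mathbb{Z}$, via the cartesian product of surgery problems, is not a weak equivalence $\Sigma^4\mathbb{L}\to\mathbb{L}$. The internal quadratic product sends (generator)$\otimes$(generator), i.e.\ $E_8\otimes E_8$, to a form of signature $64$, so on homotopy groups this map is multiplication by $8$, not an isomorphism; moreover the product of two normal maps has a surgery obstruction given by a formula involving the symmetric signatures of the targets, so it does not even depend only on the class you start with in the way your argument needs. The standard fix is to realize periodicity either geometrically as the product with the closed manifold $\mathbb{CP}^2$ (the module structure of the Quinn--Nicas spectrum over closed manifolds: the obstruction gets multiplied by the symmetric signature $\sigma^*(\mathbb{CP}^2)=1$, which is the classical $\times\,\mathbb{CP}^2$ periodicity), or algebraically via Ranicki's double skew-suspension equivalence $\mathbb{L}\simeq\Sigma^4\mathbb{L}$, which is the periodicity the paper cites from \cite{Ran92}. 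With either of these spectrum-level maps in place of your candidate, its $E^2$-effect at $(n,0)\to(n,m-n)$ is indeed the coefficient isomorphism giving $\beta$, and the rest of your naturality argument goes through verbatim.
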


The proof follows by
 the spectral sequences.\qed

We now describe
the image of $x \in H_n (B, \mathbb{L})$ in $$H_n (B, L_0) \cong Z_n (B) \otimes L_0 \subset C_n (B) \otimes L_0.$$
It can be written as $\sum k_\tau \cdot \tau$, where $\tau$ ranges over the $n$-simplices of $B$.
\begin{enumerate}
\item[Step 1.] Consider $x \in H_m (B, \mathbb{L}^+) \cong H_m (B, \mathbb{L}) \cong H_n (B, \mathbb{L})$.
\item[Step 2.] Represent $x$ as the
cycle $x: (S^m, S^m\setminus B)   \rightarrow     (\mathbb{L}_0, \emptyset)$.
\item[Step 3.] Consider $x(\tau) : (f_\tau, b_\tau): P^{m-n}_\tau   \rightarrow     Q^{m-n}_\tau$ for $\tau < B$, $|\tau| = n$.
\end{enumerate}

One observes that $\partial Q^{m-n}_\tau = \partial P^{m-n}_\tau = \emptyset$ because its boundaries are composed of
 elements $x (\rho)$, with $|\rho| > n$ (because the boundary $\partial D (\tau, S^m)$ is formed from cells of type $D(\rho, S^m)$, $|\rho| > n)$. Now $\dim B = n$, so $(f_\tau, b_\tau)$ is a closed surgery problem.

To summarize, we have obtained

\begin{corollary}\label{cor:2.4}
Let $\dim B = n$, $B \subset S^m$, with $m - n \equiv 0 (4)$. An element $x \in H_n (B, \mathbb{L})$ has the
image in $$H_n (B, L_0) \cong Z_n (B) \otimes L_0 \cong Z_n (B) \otimes L_{m-n}$$ equal to $$\sum_{\tau < B^{(n)}} n_\tau \tau$$ with $n_\tau = $ image of $\sigma (f_\tau, b_\tau)$ under $$L_{m-n} (\pi_1 (Q^{m-n}_\tau))   \rightarrow     L_{m-n} \cong L_0.$$
\end{corollary}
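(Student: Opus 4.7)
The plan is to deduce the formula by combining Lemma~\ref{lem:diagram} with the geometric cycle description of $\mathbb{L}$-homology from Subsection~\ref{subsec2.2}. First, Lemma~\ref{lem:diagram} reduces the task to computing the edge homomorphism $\alpha \colon H_m(B,\mathbb{L}) \to Z_n(B) \otimes L_{m-n}$ on the class corresponding to $x$ under the periodicity isomorphism $H_n(B,\mathbb{L}) \xrightarrow{\cong} H_m(B,\mathbb{L})$; once $\alpha(x)$ is known, the isomorphism $\beta^{-1}$ on the right of the diagram together with the periodicity $L_{m-n} \cong L_0$ delivers the desired image in $H_n(B, L_0) \cong Z_n(B) \otimes L_0$.

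Next, since $m > n = \dim B$, property~(b) of Subsection~\ref{subsec2.2} gives $H_m(B,\mathbb{L}^+) \xrightarrow{\cong} H_m(B,\mathbb{L})$, so I can represent $x$ by a simplicial cycle $x \colon (S^m, S^m \setminus B) \to (\mathbb{L}_0, \emptyset)$ as described there. For each $n$-simplex $\tau < B$, the value $x(\tau) \in \mathbb{L}_0(m-n)$ is a degree one normal map $(f_\tau, b_\tau) \colon P_\tau^{m-n} \to Q_\tau^{m-n}$ of $(m-n)$-dimensional ads. The boundary faces of this ad are indexed by simplices $\rho > \tau$ of $S^m$ with $|\rho| > n = \dim B$, so $\rho \not\subset B$ and $x(\rho) = \emptyset$. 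Hence $\partial P_\tau^{m-n} = \partial Q_\tau^{m-n} = \emptyset$, and $(f_\tau, b_\tau)$ is a closed $(m-n)$-dimensional surgery problem.

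The coefficient of $\tau$ in $\alpha(x)$ is then expected to be the homotopy class of the $(m-n)$-simplex $x(\tau)$ in $\pi_{m-n}(\mathbb{L}_0) = \pi_{m-n}(\mathbb{L}^+) \cong L_{m-n}$, which by Proposition~\ref{lem:equivalence} coincides with the simply-connected Wall obstruction of $(f_\tau, b_\tau)$. Equivalently, one first forms the Wall obstruction $\sigma(f_\tau, b_\tau) \in L_{m-n}(\pi_1(Q_\tau^{m-n}))$ and then applies the forgetful map $L_{m-n}(\pi_1(Q_\tau^{m-n})) \to L_{m-n} \cong L_0$ to recover the claimed coefficient $n_\tau$.

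The step I expect to be the main obstacle is the verification that the edge homomorphism $\alpha$ of the Atiyah--Hirzebruch spectral sequence is indeed computed by this geometric recipe $x \mapsto \sum_{|\tau|=n} [x(\tau)] \cdot \tau$ at the level of cycle representatives. This amounts to identifying, in the dual-cell presentation, the composition $H_m(B, \mathbb{L}^+) \to E^\infty_{n,m-n} \hookrightarrow E^2_{n,m-n} \cong Z_n(B) \otimes L_{m-n}$ with the assignment $\tau \mapsto [x(\tau)]$ on top simplices; this compatibility is implicit in Ranicki~\cite[Section~12]{Ran92}, and once it is spelled out, Corollary~\ref{cor:2.4} follows immediately from the previous paragraphs.
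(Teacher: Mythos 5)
Your proposal follows essentially the same route as the paper: the paper likewise combines Lemma~\ref{lem:diagram} with the cycle representation of $x$ in $H_m(B,\mathbb{L}^+)$, observes that $\partial P^{m-n}_\tau=\partial Q^{m-n}_\tau=\emptyset$ because the boundary faces correspond to simplices $\rho$ with $|\rho|>n=\dim B$, and reads off the coefficient $n_\tau$ as the class of $x(\tau)$ in $\pi_{m-n}(\mathbb{L}_0)\cong L_{m-n}$, i.e.\ the image of $\sigma(f_\tau,b_\tau)$ under the forgetful map. The compatibility of the edge homomorphism with evaluation on top simplices, which you correctly flag as the point needing verification, is left at the same implicit level in the paper (the proof of Lemma~\ref{lem:diagram} is simply "follows by the spectral sequences"), so your write-up is on par with the original.
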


\noindent \textbf{Supplement to Lemma  \ref{lem:diagram} and Corollary  \ref{cor:2.4}.}

The diagram in Lemma  \ref{lem:diagram} can be rewritten as
$$
\CD
H_n (B, \mathbb{L})   @>>>     \operatorname{H_n (B, L_0)}\\
@VV \cong V                @VV \cong V  \\
H_m (B, \mathbb{L})   @>>> \operatorname{H_n (B, L_{m-n})}\\
\endCD
$$
where the map $$H_m (B, \mathbb{L})   \rightarrow     H_n (B, L_{m-n})$$
 is the composition of 
$$H_m (B, \mathbb{L}) \cong H_m (B, \mathbb{L} \langle m-n \rangle)$$
  (cf. Ranicki~\cite[p.~156]{Ran92}) and 
$$H_m (B, \mathbb{L} \langle m-n \rangle)   \rightarrow     H_n (B, L_{m-n})$$ (cf. Ranicki~\cite[p.~289]{Ran92}). 
Note
 also the following commutativity 
$$
\CD
L_n (B, \varepsilon, \delta)   @. \cong @. \operatorname{H_n (B, \mathbb{L})}\\
@VV \cong V       @.          @VV \cong V\\
L_m (B, \varepsilon, \delta)  @. \cong @. \operatorname{H_m (B, \mathbb{L}).}\\
\endCD
$$

The above calculation resulting in Corollary  \ref{cor:2.4} follows from
the compositions $$H_n (B, \mathbb{L})   \rightarrow     H_m (B, \mathbb{L})   \rightarrow     H_n (B, L_{m-n})$$ of the above diagrams.

For the other composition one has to determine the map $H_n (B, \mathbb{L})   \rightarrow     H_n (B, L_0)$. This was done by Ranicki (\cite{Ran92}). In Prop.~15.3(II) therein an explicit formula is established using however the algebraic version of the $\mathbb{L}$-spectrum. 
In fact, Proposition~15.3(II) is the formula for
 the case of the symmetric $\mathbb{L}$-spectrum, but it is similar for the quadratic $\mathbb{L}$-spectrum.

\section{\texorpdfstring{$H_n (B, \mathbb{L})$}{Hn (B, L)} as the
controlled Wall  group}\label{sec3}

We mentioned in Section  \ref{sec1} the controlled Wall  group $L_n (B, \varepsilon, \delta)$. It can be defined for any $n \geq 0$. As before, we
assume
 that $B$ is a finite polyhedron. 
  
 Based on the work of Yamasaki \cite{Yam98}, Quinn, Pedersen and   
Ranicki \cite{PedQuiRan03} proved the following result.
\begin{theorem}\label{theor:A}
For finite dimensional ANR's there is a morphism $H_n (B, \mathbb{L})   \rightarrow     L_n (B, \varepsilon, \delta)$ which is an isomorphism for suitable $\varepsilon >0$ and $ \delta >0$.
\end{theorem}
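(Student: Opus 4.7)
The plan is to define a map $\Phi: H_n(B, \mathbb{L}) \to L_n(B, \varepsilon, \delta)$ using the geometric description of homology classes from Section~\ref{subsec2.2} and then establish bijectivity. First, given $x \in H_n(B, \mathbb{L})$, I use the periodicity $H_n(B,\mathbb{L}) \cong H_r(B,\mathbb{L})$ for $r \gg n$ with $r-n \equiv 0(4)$, and then property (b) to identify with $H_r(B,\mathbb{L}^+)$. The mock-bundle/assembly procedure described in Section~\ref{subsec2.2} (following Nicas) produces a degree one normal map $(f,b): P^r \to Q^r$ with control map $q: Q^r \to B$ and $\restr{f}{\partial P}$ a homotopy equivalence. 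Applying the construction of Section~\ref{sec1}.III to $(f,b)$ yields the controlled surgery obstruction $\sigma^c(f,b) \in L_r(B,\varepsilon,\delta)$, which via the periodicity of the controlled Wall groups corresponds to the desired element of $L_n(B,\varepsilon,\delta)$. Setting $\Phi(x) = \sigma^c(f,b)$ defines the candidate map.

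To check well-definedness, two cycle representatives of the same class $x$ differ by a boundary in $\mathbb{L}^+_{r-m}$, and the assembly turns such a boundary into a controlled degree one normal cobordism. Since $\sigma^c$ is invariant under controlled normal cobordism (with mild loss in $\varepsilon$), $\Phi(x)$ is independent of the representative. Independence of the choice of $r$ follows from the standard $\mathbb{CP}^2$-product argument implementing periodicity on both sides.

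For surjectivity, I exploit the explicit construction at the end of Section~\ref{subsec2.2}: given a representative $(G,\lambda_{\mathbb{Z}},\mu_{\mathbb{Z}})$ of an element of $L_n(B,\varepsilon,\delta)$, embed $B$ in a regular neighborhood $N \subset \mathbb{R}^r$ and realize the form by attaching $k = \operatorname{rank} G$ handles $D^{r/2}\times D^{r/2}$ along framed immersed spheres in $\partial N \times I$ with $\delta$-small diameter in $B$. The resulting degree one normal map $P^r \to N$, split transversely over the dual cell decomposition of $B \subset S^m$, defines an element of $H_r(B,\mathbb{L}^+) \cong H_n(B,\mathbb{L})$ whose image under $\Phi$ is, by the $\alpha$-approximation theorem and the $UV^1$ hypothesis, the prescribed class. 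For injectivity, assume $\Phi(x)=0$. Then $\sigma^c(f,b) = 0$, and by the controlled surgery theorem (applicable since $r \ge 5$), $(f,b)$ is controlled normally cobordant to a $\delta$-homotopy equivalence $f': P' \to Q'$ over $B$. Splitting $f'$ transversely over the dual cells of $B$ exhibits a null-cycle in $\mathbb{L}^+_{r-m}$, so $x = 0$ in $H_r(B,\mathbb{L}^+) \cong H_n(B,\mathbb{L})$.

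The main obstacle will be the $\varepsilon$-$\delta$ bookkeeping: the composite chain of constructions (periodicity lift, mock-bundle assembly, controlled surgery obstruction, Wall realization, controlled $\pi$-$\pi$ adjustments) produces a cascade of metric distortions, and one must verify the existence of $\varepsilon_0 = \varepsilon_0(B,n)$ such that for all $\varepsilon < \varepsilon_0$ and $\delta < \delta(\varepsilon)$ all tracks of homotopies, cobordisms, and chain equivalences remain within control. This is the stability statement for controlled Wall groups, and it relies essentially on the $UV^1$-property of $q$ (needed in Proposition~\ref{prop1.1} to pass between small intersection data and algebraic forms) together with the $\alpha$-approximation theorem to replace controlled $\delta$-homotopy equivalences by honest homeomorphisms up to small error. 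Establishing this uniform control — rather than constructing the map $\Phi$ itself — is the crux of the theorem, and it is precisely the technical point that Yamasaki~\cite{Yam98} and Pedersen–Quinn–Ranicki~\cite{PedQuiRan03} handle in the algebraic setting; here it must be verified in the geometric framework of Nicas' $\mathbb{L}$-spectrum.
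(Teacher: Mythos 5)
The first thing to be aware of is that the paper does not prove Theorem~\ref{theor:A}: it is quoted from Pedersen--Quinn--Ranicki \cite{PedQuiRan03} (building on Yamasaki \cite{Yam98}), and the remark following the statement only sketches their strategy -- the morphism is an assembly map for the \emph{algebraic} quadratic $\mathbb{L}$-spectrum, the proof consists of splitting an element of $L_n(B,\varepsilon,\delta)$ into pieces, and the geometric spectrum is then substituted via Ranicki's equivalence of the two spectra. Your outline is the natural geometric counterpart of that strategy (assembly one way, transversality splitting the other), so in spirit it matches the cited argument. But as a proof it has a genuine gap, and you have located it yourself: every step you write reduces the theorem to ``the stability statement for controlled Wall groups,'' which you then defer to \cite{Yam98} and \cite{PedQuiRan03}. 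That stability -- the existence of $\varepsilon_0=\varepsilon_0(B,n)$ and $\delta(\varepsilon)$ such that $L_n(B,\varepsilon,\delta)$ is independent of the choices and the assembly map is bijective -- is not bookkeeping to be checked at the end; it \emph{is} the content of the theorem. Deferring it means the proposal establishes nothing beyond what the paper already cites.

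Two further steps would fail as written even granting stability. For injectivity you pass from $\sigma^c(f,b)=0$ to a controlled normal cobordism to a $\delta$-homotopy equivalence $f'$, and then assert that splitting $f'$ transversely over the dual cells ``exhibits a null-cycle.'' The restriction of a $\delta$-homotopy equivalence over a dual cell $D(\sigma,B)$ is merely a degree one normal map of ads with small control; it need not be a homotopy equivalence of the individual pieces, so it does not directly give a nullhomotopy in $\mathbb{L}^+$. Converting small global control into triviality of the adic pieces requires a controlled splitting (squeezing) theorem -- again the same crux. Secondly, for $\Phi(x)=\sigma^c(f,b)$ to be defined via the construction of Section~\ref{sec1}, the assembled reference map $q\colon Q^r\to B$ must be (approximately) $UV^1$; you invoke ``the $UV^1$ hypothesis,'' but no such hypothesis appears in Theorem~\ref{theor:A}, so this property must be \emph{proved} for the assembled target, e.g.\ from the simple connectivity of the adic pieces of the geometric $\mathbb{L}$-spectrum. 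These are exactly the points the algebraic treatment of \cite{PedQuiRan03} is designed to circumvent, and they would have to be supplied to make a geometric proof self-contained.
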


\begin{remark}
In the paper by Pedersen, Quinn and Ranicki~\cite{PedQuiRan03}, $\mathbb{L}$
is the spectrum of quadratic algebraic Poincar\'{e} ads, and the morphism mentioned above is an assembling map. The proof of the theorem consists 
of
 showing that an element 
of
 $L_n (B, \varepsilon, \delta)$ can be split into pieces giving an element 
of
 $H_n (B, \mathbb{L})$. Now, the algebraic $\mathbb{L}$-spectrum is homotopy equivalent to the geometric one (cf. Ranicki~\cite{Ran92}), so $H_n (B, \mathbb{L})$ can be considered as the controlled Wall  group. 
\end{remark}

As in the classical surgery theory, the controlled version leads to the
controlled surgery sequence
(cf. Ferry~\cite[Theorem 1.1.]{Fer10}).
This involves the controlled structure set
for which one needs the ''stability
properties'' as proved in Ferry~\cite[Theorem 10.2]{Fer10}.


We shall now present the main result of this paper - an alternative proof that  $H_n(B,\mathbb{L})$ is the obstruction group
for controlled surgery problems.
\begin{theorem}\label{theorB}
Let $(f, b) : M^n   \rightarrow     X^n$ be a degree one normal map between manifolds, $n\geq 5$, and $\pi : X^n   \rightarrow     B$ a $UV^1$-map. Then  an element $$\sigma^c (f, b) \in H_n (B, \mathbb{L})$$ is defined
so
 that $\sigma^c (f, b) = 0$ if and only if
 $(f,b)$ is normally cobordant to a $\delta$-homotopy equivalence, uniquely up to $\varepsilon$-homotopy.
\end{theorem}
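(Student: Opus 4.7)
The plan is to construct $\sigma^{c}(f,b)$ directly as a geometric cycle in $\mathbb{L}$-homology via transversality, following the picture in Section \ref{subsec2.2}, and then to verify the two implications separately. Embed $B \subset S^{m}$ as a finite subcomplex with $m \gg n$ and $m - n \equiv 0 \pmod{4}$, and fix the dual cell decomposition of $S^{m}$. After making $q: X \to S^{m}$ and $q \circ f: M \to S^{m}$ simultaneously transverse to this decomposition, the preimages $X_{\sigma} = q^{-1}D(\sigma,S^{m})$ and $M_{\sigma} = (q \circ f)^{-1}D(\sigma,S^{m})$ carry natural $(m-|\sigma|+3)$-ad structures, and the restricted map $(f_{\sigma}, b_{\sigma}) : M_{\sigma} \to X_{\sigma}$ is a degree one normal map of $(n-|\sigma|)$-dimensional manifold ads, empty for $\sigma \not\subset B$. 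These pieces are compatible on their common faces, hence they assemble into a simplicial cycle $(S^{m}, S^{m} \setminus B) \to (\mathbb{L}_{n-m}, \emptyset)$, defining $\sigma^{c}(f,b) \in H_{n}(B, \mathbb{L})$ via the periodicity isomorphism $H_{n}(B, \mathbb{L}^{+}) \cong H_{n}(B, \mathbb{L})$ valid for $m \gg n$. Independence of the transversality choice follows from a standard transverse-homotopy argument giving a cobordism of cycles.

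For the equivalence, if $\sigma^{c}(f,b) = 0$ then the cycle bounds a chain in $\mathbb{L}$-homology; geometrically this gives a normal bordism $(F,G) : W^{n+1} \to Y^{n+1}$ over $B$ from $(f,b)$ to a new normal map $(f',b'): M' \to X'$ each of whose pieces over a top-dimensional dual cell has its top face a homotopy equivalence. Since every piece sits over a dual cell of diameter at most the mesh of the decomposition, the assembled $f'$ is a $\delta$-homotopy equivalence over $B$ once the mesh is chosen smaller than $\delta$. Conversely, given a controlled normal cobordism of $(f,b)$ to a $\delta$-homotopy equivalence, applying the same transversality construction to the cobordism itself exhibits $\sigma^{c}(f,b)$ as a boundary. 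Uniqueness up to $\varepsilon$-homotopy follows from the controlled $s$-cobordism and $\alpha$-approximation techniques valid in dimensions $\geq 5$.

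The main obstacle is promoting the abstract null-bordism in $\mathbb{L}$-homology into a \emph{geometric} controlled bordism whose terminal end really is a $\delta$-equivalence, rather than merely piecewise so. Here the $UV^{1}$ hypothesis enters essentially: it forces each piece $X_{\sigma}$ lying over a small contractible region of $B$ to be simply connected, so that the local surgery problems take values in the simply-connected geometric $\mathbb{L}$-spectrum of Nicas, which is precisely the spectrum whose $B$-homology $\sigma^{c}(f,b)$ inhabits. Careful bookkeeping of how the control parameter $\delta$ scales with the mesh of the dual cell decomposition, together with repeated use of the $\alpha$-approximation theorem to upgrade local $\delta$-equivalences to genuine local homotopy equivalences, is the technical heart of the argument.
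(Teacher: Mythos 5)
Your construction of $\sigma^{c}(f,b)$ as a transversality cycle $(S^{m},S^{m}\setminus B)\to(\mathbb{L}_{n-m},\emptyset)$ is exactly the paper's, and your reading of the null-homotopy as a collection of normal bordisms of the pieces $(f_{\sigma},b_{\sigma})$ is also correct. The gap is at the step you yourself flag as ``the main obstacle'' and then do not resolve: you assert that once each piece over a top-dimensional dual cell has its top face a homotopy equivalence, ``the assembled $f'$ is a $\delta$-homotopy equivalence over $B$ once the mesh is chosen smaller than $\delta$.'' This is a non sequitur. A map that restricts to a homotopy equivalence on each piece of a decomposition is not thereby a homotopy equivalence globally: one must produce homotopy inverses and homotopies on the pieces that \emph{agree on the common faces}. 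This is the entire technical content of the paper's proof. It proceeds by downward induction on the codimension $q$, with $X_{q}=\bigcup_{|\sigma|\ge q}X_{\sigma}$: at the inductive step one first does surgery on the interior of each $M_{\tau}$, $|\tau|=q-1$, \emph{relative to} $\partial M_{\tau}$ (where $f$ is already an equivalence by induction), and then proves a lemma using the homotopy extension property and Hatcher's Proposition~0.19 (a self-map of $X_{\tau}$ that is a homotopy equivalence and restricts to the identity on $\partial X_{\tau}$ is homotopic rel $\partial X_{\tau}$ to the identity) to deform the local inverses $\overline{f}'_{\tau}$ and the local homotopies until they match the inductively constructed data on $\partial X_{\tau}$ and can be glued. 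The $\alpha$-approximation theorem, which you invoke for this purpose, is not the relevant tool: it upgrades controlled equivalences to homeomorphisms and does not produce the compatible system of inverses and homotopies needed for the assembly.

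A second, smaller omission: for pieces of dimension $n-|\sigma|\le 3$ (and the dimension-$4$ pieces, where Freedman is needed) surgery cannot be performed to make the top face of the null-bordism a homotopy equivalence, so the normal bordism you posit ``from the vanishing of the cycle'' does not exist by surgery in those dimensions. The paper treats these pieces separately: the $UV^{1}$ hypothesis together with the degree-one condition forces $\partial X_{\tau}^{j}$ and $\partial M_{\tau}^{j}$ to be spheres, the pieces close up to $S^{j}\to S^{j}$, and an explicit normal cobordism built from the bounding problem $W^{j+1}_{\tau}\to V^{j+1}_{\tau}$ with disks removed replaces the surgery step. Your use of $UV^{1}$ only to guarantee simple connectivity of the local problems misses this second, equally essential, role.
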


\begin{remark}
Note that the $UV^1$-condition for $\pi$ is no restriction when $n\geq 5$. The
 theorem holds for $n = 4$, if the $UV^1$-condition is satisfied. 
\end{remark}

\begin{proof}
The map $\pi : X   \rightarrow     B$ can be assumed to be transverse to the dual cells of $B$ (cf. Cohen~\cite{Coh67}); i.e. $$\pi^{-1} (D (\sigma, B)) = X^{n-|\sigma|}_\sigma$$ is an $(n-|\sigma|)$-dimensional submanifold. If we embed $B \subset S^m$,
for
 $m$ sufficiently large, we have $$\pi^{-1} (D (\sigma, B)) = \pi^{-1} (D (\sigma, S^m)),$$ and $X^{n - |\sigma|}_\sigma$ has 
the
 corresponding $(m - |\sigma| +3)$-ad structure. By transversality we define $$M^{n - |\sigma|}_\sigma = f^{-1} (X^{n - |\sigma|}_\sigma).$$ The restrictions of $b$ gives a family $$\{ (f_\sigma, b_\sigma) : M^{n - |\sigma|}_\sigma   \rightarrow     X^{n - |\sigma|}_\sigma | \sigma \subset B \}$$ which obviously defines a cycle
 $$z: (S^m, S^m \setminus B)   \rightarrow     (\mathbb{L}_{n-m}, \emptyset),$$ i.e. an element $$[z] = \sigma^c (f,b) \in H_n (B, \mathbb{L}).$$

We now
suppose that $[z] = 0$, i.e. there is a simplicial map 
$$w: (S^m, S^m \setminus B) \times \Delta^1   \rightarrow     (\mathbb{L}_{n-m}, \emptyset)$$ 
with
 $w (0) = z,$ and $w (1) = \emptyset$ (cf. Ranicki~\cite[Section~12]{Ran92}). 
This means that the various $(m - |\sigma| +3)$-ads $M^{n - |\sigma|}_\sigma   \rightarrow     X^{n - |\sigma|}_\sigma$ normally bound. Since $\pi$ is $UV^1$, we can assume that these are simply-connected surgery problems. If $$\restr{f_\sigma}{\partial M_\sigma} : \partial M_\sigma   \rightarrow     \partial X_\sigma$$ is already a homotopy equivalence, it follows that
 $(f_\sigma, b_\sigma)$ is normally cobordant to a homotopy equivalence. The proof now proceeds by induction on
 $n - |\sigma|$.

Let 
$$X_q = \displaystyle\bigcup_{|\sigma| \geq q} X^{n - |\sigma|}_\sigma \ \ \hbox{\rm  and} \ \ M_q = \displaystyle\bigcup_{|\sigma| \geq q} M^{n - |\sigma|}_\sigma,$$ hence $$X_n \subset X_{n-1} \subset \ldots \subset X_1 \subset X_0 = X,$$ similarly for $M$.

\noindent \textit{The induction hypothesis:} The restriction $f$ to $M_q$ is a homotopy equivalence with 
the
inverse $\overline{f} : X_q   \rightarrow     M_q$ such that the homotopies of 
$$f \circ \overline{f} \sim Id_{X_q} \ \  \hbox{\rm and}   \ \ \overline{f}\circ f  \sim Id_{M_q}$$ 
are controlled, i.e. when restricted onto $X^{n - |\sigma|}_\sigma$ (resp. $M^{n - |\sigma|}_\sigma)$ they have 
tracks
 over $D (\sigma, B)$ when projected down to $B$. More precisely,
$$\restr{f}{M_\sigma} : M^{n - |\sigma|}_\sigma   \rightarrow     X^{n - |\sigma|}_\sigma$$ is a homotopy equivalence with the inverse 
$$\restr{\overline{f}}{X^{n - |\sigma|}_\sigma} : X^{n - |\sigma|}_\sigma   \rightarrow     M^{n - |\sigma|}_\sigma,$$ 
and the homotopies above restrict to homotopies of
$$\restr{f}{M_\sigma} 
\circ 
\restr{\overline{f}}{X_\sigma} 
\sim 
Id_{x_\sigma} 
\ \ \hbox{\rm  and} \ \  
\restr{\overline{f}}{X_\sigma} 
\circ 
\restr{f}{M_\sigma} 
\sim Id_{M_\sigma}$$ 
over $D (\sigma, B)$.

\noindent \textit{The inductive step:} 
Suppose we are given $\tau \subset B$ with $|\tau| = q-1$, i.e. $\dim X_\tau = \hbox{\rm dim} M_\tau = n - q + 1$, and $$\partial M_\tau = \displaystyle\bigcup_{\sigma} M_\sigma, \ \ \partial X_\tau = \displaystyle\bigcup_{\sigma} X_\sigma$$ with $|\sigma| = q$, and $\sigma$ a face of $\tau$.
By the inductive
 hypothesis, $\restr{f}{M_\sigma}$ is a homotopy
 equivalence. These can be glued together by the
well
known homotopy theory (cf. Hatcher~\cite{Hat02}, or Sullivan~\cite[Lemma~H]{Sul66}) to give a homotopy equivalence $\restr{f}{\partial M_\tau} : \partial M_\tau   \rightarrow     \partial X_\tau$. So let $$F_\tau : (V_\tau, M_\tau, M'_\tau)   \rightarrow     (X_\tau \times I, X_\tau \times 0, X_\tau \times 1)$$ be a normal cobordism as explained above such that $\restr{F_\tau}{M_\tau} = f_\tau$, $\restr{F_\tau}{M'_\tau} = f'_\tau$ 
are homotopy equivalences, and because surgery was
 done in the interior of $M_\tau$, we have that $$\restr{F_\tau}{\partial V_\tau} : \partial V_\tau = M_\tau \cup \partial M_\tau \times I \cup M'_\tau   \rightarrow     X_\tau \times \{0\} \cup \partial X_\tau \times I \cup X_\tau \times \{1\}$$ coincides with $$f_\tau \cup (f_\tau \times I) \cup f'_\tau$$ (note that $\restr{f'_\tau}{\partial M_\tau} = \restr{f_\tau}{\partial M_\tau} )$. 

We denote by $\overline{f}'_\tau :  X_\tau   \rightarrow     M_\tau$ a homotopy inverse of $f'_\tau$. 
In our construction we add the cylinders $\partial M_\tau \times I$ and $\partial X_\tau \times I$ to $M'_\tau$ and $X_\tau \times 1$, 
and again denote them by
 $M'_\tau$ and $X_\tau \times 1$.
Then $f$ and $f'_\tau$ can be glued to give a homotopy equivalence 
$$f \cup f'_{\tau} : M_q \cup M'_\tau   \rightarrow     X_q \cup X_\tau.$$

This can be done for every $\tau \subset B$ with $|\tau| = q - 1$. If $M_\tau \cap M_{\tau'}$ are nonempty, they intersect in a common face $M_\sigma$, resp. $X_{\sigma}$, where we have the map $f$. 
Glued together they give a homotopy equivalence $f' : M_{q - 1}   \rightarrow     X_{q - 1}$.

\begin{lemma}
 There is a homotopy inverse $\overline{f}' : X_{q - 1}  \rightarrow    M_{q - 1}$ such that $\restr{\overline{f}'}{X_q} = \overline{f}$, and $\restr{\overline{f}'}{X_\tau}$ is a homotopy inverse of $f'_\tau$ for every $\tau \subset B$ with $|\tau| = q - 1$.
\end{lemma}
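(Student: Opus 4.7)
The plan is to build $\overline{f}'$ by modifying each $\overline{f}'_\tau$ so that it agrees with $\restr{\overline{f}}{\partial X_\tau}$ on the boundary, and then assembling. For each $\tau \subset B$ with $|\tau|=q-1$, after the cylinder identification $\partial M'_\tau = \partial M_\tau$, the map $\restr{f'_\tau}{\partial M'_\tau}$ coincides with $\restr{f_\tau}{\partial M_\tau}$, and both $\restr{\overline{f}}{\partial X_\tau}$ (furnished by the inductive hypothesis on $X_q$) and $\restr{\overline{f}'_\tau}{\partial X_\tau}$ are homotopy inverses of this common map into $\partial M_\tau \subset M'_\tau$. Any two homotopy inverses of a given homotopy equivalence are themselves homotopic, and by the control clause in the inductive hypothesis such a homotopy can be chosen with tracks in $D(\tau,B)$.

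Now apply the homotopy extension property for the cofibration $\partial X_\tau \hookrightarrow X_\tau$: extend this boundary homotopy to a homotopy of $\overline{f}'_\tau$ itself, producing $\widetilde{f}'_\tau: X_\tau \to M'_\tau$ with $\restr{\widetilde{f}'_\tau}{\partial X_\tau} = \restr{\overline{f}}{\partial X_\tau}$ and $\widetilde{f}'_\tau \simeq \overline{f}'_\tau$. Because $\widetilde{f}'_\tau$ is homotopic to $\overline{f}'_\tau$, it remains a homotopy inverse of $f'_\tau$. The pieces $\overline{f}$ and $\{\widetilde{f}'_\tau\}_{|\tau|=q-1}$ now agree on every overlap $\partial X_\tau$ and, since different $\tau$'s of the same dimension meet only along common faces $X_\sigma$ with $|\sigma|\ge q$ on which all pieces restrict to $\restr{\overline{f}}{X_\sigma}$, they assemble into a well-defined continuous map $\overline{f}': X_{q-1} \to M_{q-1}$ satisfying the required restriction properties.

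To verify that $\overline{f}'$ is a homotopy inverse of $f'$, one needs controlled homotopies $f'\circ\overline{f}' \simeq Id_{X_{q-1}}$ and $\overline{f}'\circ f' \simeq Id_{M_{q-1}}$. Over $X_q$ and $M_q$ these are provided by the inductive hypothesis, and over each $X_\tau$ and $M'_\tau$ they exist because $\widetilde{f}'_\tau$ is a homotopy inverse of $f'_\tau$. The main obstacle is that the two candidate homotopies restricted to the overlap $\partial X_\tau$ end at the identity but may traverse different paths in the mapping space, so they need to be matched. The standard remedy, along the lines of Sullivan~\cite[Lemma~H]{Sul66} and Hatcher~\cite{Hat02}, is another round of the homotopy extension property applied to homotopies of homotopies: with the underlying maps held fixed, adjust the $\tau$-side homotopies until they coincide on $\partial X_\tau$ with those coming from $X_q$. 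Because the homotopies on each $\sigma$-face ($|\sigma|=q$) already had tracks over $D(\sigma,B)\subset D(\tau,B)$, the adjusted homotopies still have tracks over $D(\tau,B)$, which maintains the inductive control estimate and completes the inductive step.
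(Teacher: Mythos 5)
Your proposal is correct and follows essentially the same route as the paper: you observe that $\restr{\overline{f}}{\partial X_\tau}$ and $\restr{\overline{f}'_\tau}{\partial X_\tau}$ are homotopic because both invert the same boundary equivalence, use the homotopy extension property for $\partial X_\tau \hookrightarrow X_\tau$ to replace $\overline{f}'_\tau$ by a map agreeing with $\overline{f}$ on the boundary, and glue along common faces. Your final paragraph on matching the homotopies $f'\circ\overline{f}'\sim Id$ via a further application of HEP (rel boundary) is likewise how the paper completes the inductive step immediately after the lemma.
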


\begin{proof}
We fix $\tau \subset B$, $|\tau| = q - 1$. First note that $\restr{\overline{f}}{\partial X_\tau} \sim \restr{\overline{f}'_\tau}{\partial X_\tau}$ (where $\overline{f}'_\tau$ is the above introduced inverse of $f'_\tau)$. This can be seen as follows:
$$f \circ \restr{\overline{f}}{\partial X_\tau} \sim Id_{\partial X_\tau} \ \ \hbox{\rm and} \ \  f_\tau \circ \restr{\overline{f}'_\tau}{\partial X_\tau} \sim Id_{\partial X_\tau}$$
implies 
$$f_\tau \circ \restr{\overline{f}}{\partial X_\tau} = f \circ \restr{\overline{f}}{\partial X_\tau} \sim f_\tau \circ \restr{\overline{f}'_\tau}{\partial X_\tau}. $$
However,
 $f_\tau$ is a homotopy equivalence, hence $\restr{\overline{f}}{\partial X_\tau} \sim \restr{\overline{f}'_\tau}{\partial X_\tau}$.

Let $$H_t : \partial X_\tau  \rightarrow    \partial M'_\tau = \partial M_\tau$$ be a homotopy such that
$$H_0 = \restr{\overline{f}}{\partial X_\tau}
\ \ \hbox{\rm and} \ \ 
H_1 = \restr{\overline{f}'_\tau}{\partial X_\tau}.$$
 By the Homotopy Extension Property we obtain a homotopy 
$\widetilde{H}_t : X_\tau \times I   \rightarrow     M'_\tau$ such that
\begin{diagram}
X_\tau \times I & &  \\
\bigcup  & \rdTo^{\widetilde{H}_t} & \\
\partial X_\tau \times I \cup X_\tau \times \{1\} & 
\rTo_{H_t \cup \overline{f}'_\tau} & 
\partial M'_\tau \cup M'_\tau =  M'_\tau
\end{diagram}
commutes. 
Hence $$\widetilde{f}_\tau = \widetilde{H}_0 : X_\tau   \rightarrow     M'_\tau$$
 is a homotopy equivalence such that $\restr{\widetilde{f}_\tau}{\partial X_\tau} = \restr{\overline{f}}{\partial X_\tau}$.
Hence $$\overline{f} \cup \widetilde{f}_\tau : X_q \cup X_\tau   \rightarrow     M_q \cup M'_\tau$$ is a homotopy inverse of $f'$ and it has the desired property. 
Since
at the  intersection $X_\tau \cap X_{\tau'}$ the maps 
$\widetilde{f}_\tau, \widetilde{f}_{\tau'}$ coincide with $\overline{f}$, 
 we can glue them together to get
 $\overline{f}' : X_{q - 1}   \rightarrow     M_{q - 1}$ as claimed.
\end{proof} 

In order to complete the proof of Theorem  \ref{theorB} it remains to prove that there are homotopies of $f' \circ \overline{f}' \sim Id_{X_{q - 1}}$, and $\overline{f}' \circ f' \sim Id_{M_{q - 1}}$ with small tracks. We shall construct such a homotopy for  $f' \circ \overline{f}' \sim Id_{X_{q - 1}}$. The other case is similar.

We let $H_t : X_q \times I   \rightarrow     X_q$ be the homotopy of $f\circ \overline{f} \sim Id_{X_q}$ given by the inductive hypothesis, so 
$h_t =  \restr{H_t}{\partial X_\tau}$
 is a homotopy of $f \circ \restr{\overline{f}}{\partial X_\tau} \sim \restr{Id}{\partial X_\tau}$. Recall that $X_q \cap X_\tau = \partial X_\tau$, so $f'_\tau \circ \overline{f}'_\tau$ coincides with $h_0 = f_\tau \circ \overline{f}_\tau$ on $\partial X_\tau.$
We consider $$h_t \cup f'_\tau \circ \overline{f}'_\tau : \partial X_\tau \times I \cup X_\tau \times \{0\}   \rightarrow     X_\tau$$ and apply the Homotopy Extension Property to obtain $h'_t = X_\tau \times I    \rightarrow     X_\tau$ such that
\begin{diagram}
X_\tau \times I & &  \\
\bigcup  & \rdTo^{h'_t} & \\
\partial X_\tau \times I  \cup X_\tau \times \{0\} & \rTo & X_\tau
\end{diagram}

The map $h'_1 :  X_\tau   \rightarrow     X_\tau$ is homotopic to $Id_{X_\tau}$, since $h'_0 = f'_\tau \circ \overline{f}'_\tau$ and it satisfies $\restr{h'_1}{\partial X_\tau} = h_1 = Id_{\partial X_\tau}$.

It follows from
Hatcher~\cite[Proposition~0.19]{Hat02}
 that $h'_1$ is homotopic relative $\partial X_\tau$ to $Id_{X_\tau}$ by a homotopy $h''_t$ (note that
here
 $Id_{X_\tau}$ is a homotopy inverse of $h'_1)$. We can therefore compose the homotopies $h'_t$ and $h''_t$ in the usual way to get a homotopy $$(h' \ast h'')_t : X_\tau \times I   \rightarrow     X_\tau$$ which coincides with $H_t$ on $X_q \cap X_\tau$, giving a homotopy $$H_t \cup (h' \ast h'')_t : (X_q \cup X_\tau) \times I    \rightarrow     X_q \cup X_\tau$$ between $(f \circ \overline{f}) \cup (f'_\tau \circ f'_\tau)$  and $Id$.

If $\tau, \tau' \subset B$ are $(q + 1)$-simplices such that $X_\tau \cap X_{\tau'} \neq \emptyset$, they intersect in a common face $\sigma, |\sigma| = q$, so the above constructed homotopies coincide
 with $H_t$, i.e. we can glue them together to get the desired controlled homotopies.

One notes that
the
 tracks can be arbitrary small (measured in $B$) if we use an arbitrary small cell-decomposition of $B$. 
This proves the inductive step.

We have in particular to consider the low-dimensional cases $n$, $n-1$, and
$n-3$, because surgery does not apply (note that in dimension $4$ one has to apply Freedman's result).

By the degree one property we can assume that $M_n = X_n$.
For $n-i, \ 1 \leq i \leq 3$, the pieces $$(f_\tau, b_\tau) : M^j_\tau   \rightarrow     X^j_\tau, \ \  1 \leq j \leq 3,$$ are special.
Namely, $\partial X^j_\tau$ is a $(j - 1)$-sphere, because $\pi$ is $UV^1$. We can close $\partial X^j_\tau$ by a $j$-disk to get a closed simply-connected $j$-manifold, i.e. a $j$-sphere. By the inductive hypothesis, 
$\partial M^j_\tau$ 
must also be a $(j-1)$-sphere so $M^j_\tau$
can be closed. 

The closed problem $M^j_\tau   \rightarrow     X^j_\tau$ bounds a problem $W^{j+1}_\tau   \rightarrow     V^{j+1}_\tau$ (because $\sigma^c (f,b) = 0$). Deleting
the
 $(j+1)$-disks one obtains a normal cobordism between $$M^j_\tau   \rightarrow     X^j_\tau \ \ \hbox{\rm and} \ \  M'^j_\tau = S^j  \xrightarrow{\cong} X^j_\tau = S^j.$$

We can now choose a degree one map 
$$(V^{j+1}_\tau \setminus \mathring{D}^{j+1}, X^j_\tau, S^j)   \rightarrow     (S^j \times I, S^j \times \{0\}, S^j \times \{1\})$$
and obtain a composition
$$F_\tau : (W^{j+1}_\tau \setminus \mathring{D}^{j+1}, M^j_\tau, S^j)   \rightarrow     (X^j_\tau \times I, X^j_\tau \times \{0\}, X^j_\tau \times \{1\}).$$
With this $F_\tau$, the proof proceeds as above and Theorem~\ref{theorB} is finally proved.
\end{proof}

\section*{Epilogue}
We shall conclude this paper by a final remark on
the
 controlled Wall
realization.
In our earlier paper~\cite{Heg14}, we showed
that the controlled structure set of a manifold $X$ with control
map $q: X\rightarrow B$ is a subgroup of $H_{n+1} (B, X, \mathbb{L})$. The controlled Wall
action of $H_{n+1} (B, \mathbb{L})$ on it is then nothing but the canonical map $$H_{n+1} (B, \mathbb{L}) \rightarrow H_{n+1} (B, X, \mathbb{L})$$ of $\mathbb{L}$-homology groups.

\section*{Acknowledgements}
This research was supported by the Slovenian Research Agency grants P1-0292, J1-7025, J1-8131, N1-0064, and N1-0083. 
We thank K. Zupanc for her technical assistance with the preparation of the manuscript. 
We acknowledge the referee for comments and suggestions.

\end{document}